\definecolor{Red}{cmyk}{0,1,1,0}
\definecolor{verde}{cmyk}{1,0,1,0}
\definecolor{loka}{cmyk}{.5,0,1,.5}
\definecolor{azul}{cmyk}{1,1,0,0}
\numberwithin{equation}{section}
\newcommand{\be}{\begin{equation}}
\newcommand{\ee}{\end{equation}}
\newtheorem{definition}{Definition}
\newtheorem{theorem}{Theorem}
\newtheorem{remark}{Remark}
\newtheorem{lemma}{Lemma}
\begin{document}
\title{On the stability of a hyperbolic fractional partial differential equation}
\author{J. Vanterler da C. Sousa$^1$}
\address{$^1$ Department of Applied Mathematics, Institute of Mathematics,
 Statistics and Scientific Computation, University of Campinas --
UNICAMP, rua S\'ergio Buarque de Holanda 651,
13083--859, Campinas SP, Brazil\newline
e-mail: {\itshape \texttt{vanterlermatematico@hotmail.com, capelas@ime.unicamp.br }}}
\author{E. Capelas de Oliveira$^1$}

\begin{abstract} In this paper, by means of the Gronwall inequality, the $\psi$-Riemann-Liouville fractional partial integral and the $\psi$-Hilfer fractional partial derivative are introduced and some of its particular cases are recovered. Using these results, we investigate the Ulam-Hyers and Ulam-Hyers-Rassias stabilities of the solutions of a fractional partial differential equation of hyperbolic type in a Banach space	$(\mathbb{B}, \left\vert \cdot\right\vert)$, real or complex.

\vskip.5cm
\noindent
\emph{Keywords}: Hyperbolic fractional partial differential equation; $\psi$-Riemann-Liouville fractional partial integral; $\psi$-Hilfer fractional partial derivative; Ulam-Hyers stability; Ulam-Hyers-Rassias stability.
\newline 
MSC 2010 subject classifications. 26A33, 35B35, 35R11, 35LXX.
\end{abstract}
\maketitle

\section{Introduction} 

Fractional calculus is currently one of the subjects most studied by mathematicians and also by researchers in physics, chemistry, engineering, among others, due to its innumerable applications in modeling real phenomena. Indeed, by considering non-integer order derivatives, it has been possible, in several cases, to better match theoretical models to experimental data, allowing better predictions of the future dynamics of processes 
\cite{KDI,ELM,HER,KSTJ,SAMKO}. One problem in this field is the great number of possible different definitions of fractional operators, making the choice of the best operator for each particular system a crucial issue. One way to overcome this problem is to consider more general definitions, of which the usual ones can be considered particular cases \cite{KSTJ,SAMKO,OLMS}. In this sense, Sousa and Oliveira \cite{ZE1} recently introduced a fractional derivative of one variable, the so-called $\psi$-Hilfer fractional derivative, from which
it is possible to obtain a wide class of fractional derivatives already well established. Therefore, one of the objectives of this paper is to introduce the $\psi$-Riemann-Liouville fractional partial integral of a function with respect to another function and the $\psi$-Hilfer fractional partial derivative of $N$ variables.

On the other hand, the use of fractional derivatives in systems of differential equations has found several applications in mathematical models for population dynamics, erythrocyte sedimentation rate and others
\cite{KDI,JLEC1,sao,fra,almeida}. In addition, there has been a significant development in the study of ordinary differential equations involving derivatives of fractional order (see \cite{B8} and references therein). In
particular, many studies have focused on the study of the Ulam-Hyers stability of solutions of integrodifferential equations of fractional order \cite{est1,est2,est3,est4}. 

In the last years, Lungu et al., Brzdek et al. and Rus et al. published several papers on the study of the Ulam stability of the solution of partial differential equations of hyperbolic and pseudoparabolic types \cite{ulam,ulam1,ulam2,ulam3}. In some of these cases, the authors made use of the Gronwall inequality and studied the Ulam-Hyers stability in Banach spaces \cite{ulam,ulam1}. On the other hand, Abbas and Benchohra \cite{B1,B2,B4}, Elemad and Rezapour \cite{B3}, Abbas et al. \cite{B5}, among others \cite{B8,B9}, devoted themselves to studying the existence, uniqueness and the Ulam-Hyers stability of solutions of differential equations of fractional order. We can also mention the study of functional partial differential equations and the Darboux problem for partial differential equations of fractional order \cite{B6,B7}.

In this paper, we consider the following fractional partial differential equation of hyperbolic type: 
\begin{equation}\label{eq3}
\dfrac{\partial _{\beta ;\psi }^{2\alpha }u}{\partial _{\beta ;\psi }^{\alpha }x^{\alpha }\partial _{\beta ;\psi }^{\alpha }y^{\alpha }}\left( x,y\right)= f\left( x,y,u\left( x,y\right) ,\dfrac{\partial _{\beta ;\psi }^{\alpha }u}{\partial _{\beta ;\psi }^{\alpha }x^{\alpha }}\left( x,y\right)  ,\dfrac{\partial _{\beta ;\psi }^{\alpha }u}{\partial _{\beta ;\psi }^{\alpha}y^{\alpha }}\left( x,y\right) \right),
\end{equation}
where $\dfrac{\partial _{\beta ;\psi }^{2\alpha }u}{\partial _{\beta ;\psi }x^{\alpha }\partial_{\beta ;\psi }y^{\alpha }}\left( \cdot, \cdot\right)$ is the $\psi$-Hilfer fractional partial derivative with $\dfrac{1}{2}<\alpha \leq 1$, $0\leq \beta \leq 1$ and $\text{ }0\leq x<a,\text{ }0\leq y<b$ and $f\in
C\left( \left[ 0,a\right) \times \left[ 0,b\right) \times \mathbb{B}^{3},\mathbb{B}\right)$, where $(\mathbb{B}, \left\vert \cdot\right\vert)$ is a real or complex Banach space.

The main objective of this paper is to introduce the $\psi$-Riemann-Liouville fractional partial integral and the $\psi$-Hilfer fractional partial derivative of $N$ variables, in order to study the Ulam-Hyers and Ulam-Hyers-Rassias stabilities of the solutions of Eq.(\ref{eq3}) by means of the $\psi$-Hilfer fractional partial derivative and the Gronwall inequality.

The paper is organized as follows: In section 2, we present definitions and results considered important for the development of the article. In section 3, we introduce the $\psi$-Riemann-Liouville fractional integral of a function with respect to another function of $N$ variables and the $\psi$-Hilfer fractional partial derivative and study some of its particular cases, highlighting relevant points.  Moreover, using the $\psi$-Hilfer fractional partial derivative, we present new versions for the definitions of Ulam-Hyers and Ulam-Hyers-Rassias stabilities. In section 4, by means of Theorem \ref{ver1}, we study the Ulam-Hyers stability, while in section 5,
through Theorem \ref{ver2}, we discuss the generalized Ulam-Hyers-Rassias stability. Concluding remarks close the paper.

\section{Preliminaries} 

In this section we present the definitions of the $\psi$-Riemann-Liouville fractional integral and the $\psi$-Hilfer fractional derivative; we introduce the Gronwall inequality by means of a lemma and present a particular case of Eq.(\ref{eq1}). 

\begin{definition}\textnormal{\cite{ZE1}} Let $\left( a,b\right) $ $\left( -\infty \leq a<b\leq \infty \right) $ be a finite (or infinite) interval of the real line $\mathbb{R}$ and let $\alpha >0$. Also let	$\psi \left( t\right) $ be an increasing and positive monotone function on $\left( a,b\right]$ having a continuous derivative $\psi^{\prime }\left( t\right)$ on $\left( a,b\right)$.  We denote its first 	derivative as $\dfrac{d}{dt}\psi(t)=\psi'(t)$.  The left-sided fractional integral of a function $f$ with respect to a function $\psi
$ on $ \left[ a,b\right] $ is defined by 
\begin{equation}\label{eq1}
I_{a+}^{\alpha ;\psi }f\left( t\right) =\frac{1}{\Gamma \left( \alpha \right) }\int_{a}^{t}\psi ^{\prime }\left( s\right) \left( \psi \left( t\right) -\psi 
\left( s\right) \right) ^{\alpha -1}f\left( s\right) ds.
\end{equation}
The right-sided fractional integral is defined in an analogous form.
\end{definition}

\begin{definition}{\rm\cite{ZE1}} Let $n-1<\alpha <n$ with $n\in \mathbb{N}$; let $\ I=\left[ a,b\right] $ be an interval such that	$-\infty \leq a<b\leq \infty $ and let $f,\psi \in C^{n}\left[a,b\right] $ be two functions such that $\psi $ is increasing and $\psi ^{\prime }\left( t\right) \neq 0$ for all $t\in I$.
The left-sided $\psi$-Hilfer fractional derivative, denoted by $^{H}\mathbb{D}_{a+}^{\alpha ,\beta ;\psi }\left( \cdot \right) $, of a function $f$, of order $\alpha $ and type $0\leq \beta\leq 1,$ is defined by 
\begin{equation}\label{eq2}
^{H}\mathbb{D}_{a+}^{\alpha ,\beta ;\psi }f\left( t\right) =I_{a+}^{\beta \left( n-\alpha \right) ;\psi }\left( \frac{1}{\psi ^{\prime }\left( t\right) }\frac{d}{dt}\right) ^{n}I_{a+}^{\left( 1-\beta \right) \left(
n-\alpha \right) ;\psi }f\left( t\right) .
\end{equation}
The right-sided $\psi$-Hilfer fractional derivative is defined in an analogous form.
\end{definition}

The following Gronwall lemma is an important tool in proving the main results of this paper.

\begin{lemma}\textnormal{\cite{gronwall}} \label{l1} We assume that: 
\begin{enumerate}

\item  $u,v,h\in C\left( \mathbb{R}_{+}^{n},\mathbb{R}_{+}\right)$;
	
\item $\psi \left( t\right)$ is increasing and $\psi ^{\prime }\left( t\right)$ nondecreasing, for all $t\in \mathbb{R}_{+}^{n}$ and for any	$t\geq a$,

\begin{equation*}
u\left( t\right) \leq v\left( t\right) +h\left( t\right) \int_{a}^{t}\psi ^{\prime }\left( s\right) \left( \psi \left( t\right) -\psi \left( s\right) 
\right) ^{\alpha -1}u\left( s\right) ds ;
\end{equation*}

\item $h\left( t\right) $ is nonnegative and nondecreasing.
\end{enumerate}

Then, we have
\begin{equation*}
u\left( t\right) \leq v\left( t\right) \mathbb{E}_{\alpha }\left[ h\left( t\right) \Gamma \left( \alpha \right) \left( \psi \left( t\right) -\psi \left( a\right) 
\right) ^{\alpha }\right],
\end{equation*}
for any $t\geq a$, where $\mathbb{E}_{\alpha}(\cdot)$ is the one-parameter Mittag-Leffler function.
\end{lemma}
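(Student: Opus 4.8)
The plan is to read hypothesis (2) as a fixed-point-type estimate and iterate it. Define the linear operator
$$(B\phi)(t) = h(t)\int_a^t \psi'(s)\left(\psi(t)-\psi(s)\right)^{\alpha-1}\phi(s)\,ds,$$
so that the assumption reads $u \leq v + Bu$. Since the kernel and $h$ are nonnegative, $B$ is monotone, and iterating gives $u \leq \sum_{k=0}^{n-1} B^k v + B^n u$ for every $n$. The whole argument then reduces to three steps: obtaining a closed form for the iterates $B^k$, showing the remainder $B^n u$ vanishes, and summing the resulting series into a Mittag-Leffler function.

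First I would establish, by induction on $n$, the pointwise bound
$$(B^n \phi)(t) \leq \frac{\left[h(t)\Gamma(\alpha)\right]^n}{\Gamma(n\alpha)}\int_a^t \psi'(s)\left(\psi(t)-\psi(s)\right)^{n\alpha-1}\phi(s)\,ds.$$
The base case $n=1$ is the definition of $B$. For the inductive step I apply $B$ to the hypothesis, use that $h$ is nondecreasing to replace $h(s)$ by $h(t)$ under the integral, and exchange the order of the two resulting integrations by Fubini's theorem. The inner integral $\int_\tau^t \psi'(s)(\psi(t)-\psi(s))^{\alpha-1}(\psi(s)-\psi(\tau))^{n\alpha-1}\,ds$ is evaluated by the substitution $w=\psi(s)$ followed by $w = \psi(\tau)+(\psi(t)-\psi(\tau))r$, which turns it into a Beta integral equal to $(\psi(t)-\psi(\tau))^{(n+1)\alpha-1}\,\Gamma(n\alpha)\Gamma(\alpha)/\Gamma((n+1)\alpha)$; the Gamma factors telescope to give exactly the claimed form with $n$ replaced by $n+1$. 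This induction, with its Fubini exchange and Beta-function identity, is the technical heart of the argument and the step I expect to be the main obstacle — in particular, the monotonicity of $h$ is used precisely here.

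Next I would dispose of the remainder. Since $u$ is continuous on $[a,t]$ it is bounded there by some $M$, and the integral $\int_a^t \psi'(s)(\psi(t)-\psi(s))^{n\alpha-1}\,ds = (\psi(t)-\psi(a))^{n\alpha}/(n\alpha)$ is elementary, so the bound from the first step gives
$$0 \leq (B^n u)(t) \leq \frac{M\left[h(t)\Gamma(\alpha)(\psi(t)-\psi(a))^\alpha\right]^n}{\Gamma(n\alpha+1)} \longrightarrow 0 \quad (n\to\infty),$$
because the general term of the Mittag-Leffler series tends to zero. Finally, using that $v$ (like $h$) is nondecreasing to bound $v(s)\leq v(t)$ inside each iterate and evaluating the same elementary integral, the partial sums $\sum_{k=0}^{n-1} B^k v(t)$ are dominated term-by-term by
$$v(t)\sum_{k=0}^{\infty}\frac{\left[h(t)\Gamma(\alpha)(\psi(t)-\psi(a))^\alpha\right]^k}{\Gamma(k\alpha+1)} = v(t)\,\mathbb{E}_\alpha\!\left[h(t)\Gamma(\alpha)(\psi(t)-\psi(a))^\alpha\right].$$
Letting $n\to\infty$ in $u \leq \sum_{k=0}^{n-1}B^k v + B^n u$ then yields the asserted inequality. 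I would remark that the monotonicity of $v$ — implicit in, or to be assumed alongside, the stated hypotheses — is what allows the estimate to collapse to the single factor $v(t)$ rather than a convolution of $v$ against the Mittag-Leffler kernel.
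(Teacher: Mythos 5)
The paper contains no proof of this lemma: it is imported by citation (\cite{gronwall}, the Sousa--Oliveira $\psi$-Gronwall inequality), so there is nothing in-paper to compare against line by line. Your argument is correct and is in fact the standard proof from that cited source (itself the Ye--Gao--Ding iteration adapted to the $\psi$-kernel): iterate $u\le v+Bu$ to $u\le\sum_{k=0}^{n-1}B^k v+B^n u$, prove by induction the kernel bound $(B^n\phi)(t)\le\frac{[h(t)\Gamma(\alpha)]^n}{\Gamma(n\alpha)}\int_a^t\psi'(s)\left(\psi(t)-\psi(s)\right)^{n\alpha-1}\phi(s)\,ds$ --- your substitution $w=\psi(s)$ reducing the inner integral to the Beta function $\Gamma(n\alpha)\Gamma(\alpha)/\Gamma((n+1)\alpha)$ is exactly the right mechanism, and monotonicity of $h$ enters precisely where you say --- then kill the remainder via the $1/\Gamma(n\alpha+1)$ decay of the Mittag-Leffler coefficients and sum the series.

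Your closing remark is the one substantive catch, and it is correct: as stated, the lemma omits the hypothesis that $v$ is nondecreasing, and without it the conclusion $u(t)\le v(t)\,\mathbb{E}_{\alpha}\left[h(t)\Gamma(\alpha)\left(\psi(t)-\psi(a)\right)^{\alpha}\right]$ is false --- the iteration alone only yields $u(t)\le v(t)+\int_a^t\sum_{k\ge1}\frac{[h(t)\Gamma(\alpha)]^k}{\Gamma(k\alpha)}\left(\psi(t)-\psi(s)\right)^{k\alpha-1}\psi'(s)v(s)\,ds$, and a $v$ that is positive in the interior but vanishes at $t$ gives a counterexample to the collapsed form. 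In the cited source the Mittag-Leffler conclusion is indeed stated as a corollary under ``$v$ nondecreasing,'' a hypothesis dropped in transcription here; it is harmless in this paper's applications (Theorems \ref{ver1} and \ref{ver2}), where the role of $v$ is played by $\varepsilon\left(\psi(x)-\psi(0)\right)^{\alpha_1}\left(\psi(y)-\psi(0)\right)^{\alpha_2}/\left(\Gamma(\alpha_1+1)\Gamma(\alpha_2+1)\right)$ or by $\lambda_{\varphi}^{1}\varphi$ with $\varphi$ assumed increasing, but your proof is the honest one in adding it explicitly. One minor gap you share with the paper: the statement is set on $\mathbb{R}_{+}^{n}$ with $t\ge a$, while both your proof and the cited source are one-dimensional; the paper's two-variable applications tacitly use iterated one-variable bounds, so this should be either restricted to $n=1$ or accompanied by a word on the componentwise ordering.
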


\begin{lemma}\textnormal{\cite{ZE1}} \label{LE1} Let $\alpha>0$ and $\delta>0$. If $f(x)= \left( \psi \left( x\right) -\psi \left( a\right)
\right) ^{\delta -1}$, then 
\begin{equation*}
I_{a+}^{\alpha ;\psi }f(x)=\frac{\Gamma \left( \delta \right) }{\Gamma \left(
\alpha +\delta \right) }\left( \psi \left( x\right) -\psi \left( a\right)
\right) ^{\alpha +\delta -1}.
\end{equation*}
\end{lemma}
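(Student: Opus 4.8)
The plan is to substitute the given $f$ directly into the definition \eqref{eq1} of the $\psi$-Riemann-Liouville fractional integral and to reduce the resulting integral to a Beta function. First I would write
\[
I_{a+}^{\alpha ;\psi }f(x)=\frac{1}{\Gamma(\alpha)}\int_a^x \psi'(s)\bigl(\psi(x)-\psi(s)\bigr)^{\alpha-1}\bigl(\psi(s)-\psi(a)\bigr)^{\delta-1}\,ds.
\]

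Then I would introduce the change of variables $u=\dfrac{\psi(s)-\psi(a)}{\psi(x)-\psi(a)}$, which is legitimate because $\psi$ is increasing with a continuous nonvanishing derivative, so that $s\mapsto u$ is a bijection of $[a,x]$ onto $[0,1]$. Setting $A:=\psi(x)-\psi(a)>0$, one has $\psi(s)-\psi(a)=Au$, $\psi(x)-\psi(s)=A(1-u)$, and $\psi'(s)\,ds=A\,du$, so that the integral collapses to
\[
\frac{A^{\alpha+\delta-1}}{\Gamma(\alpha)}\int_0^1 (1-u)^{\alpha-1}u^{\delta-1}\,du.
\]

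The key step is recognizing the remaining integral as the Beta function $B(\delta,\alpha)=\int_0^1 u^{\delta-1}(1-u)^{\alpha-1}\,du$, whose convergence is guaranteed precisely by $\alpha>0$ and $\delta>0$. Invoking the classical identity $B(\delta,\alpha)=\dfrac{\Gamma(\delta)\Gamma(\alpha)}{\Gamma(\alpha+\delta)}$ and cancelling the factor $\Gamma(\alpha)$ yields $\dfrac{\Gamma(\delta)}{\Gamma(\alpha+\delta)}A^{\alpha+\delta-1}$, which is exactly the claimed expression once $A$ is rewritten as $\psi(x)-\psi(a)$.

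There is no serious obstacle here, since the computation is routine. The only points requiring a little care are verifying that the substitution is valid under the stated hypotheses on $\psi$ (monotonicity together with a continuous derivative makes it bijective with a well-behaved Jacobian) and checking convergence of the Beta integral, which is where the assumptions $\alpha>0$ and $\delta>0$ are genuinely used.
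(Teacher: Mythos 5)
Your proof is correct: the substitution $u=\bigl(\psi(s)-\psi(a)\bigr)/\bigl(\psi(x)-\psi(a)\bigr)$ reduces the integral to the Beta function $B(\delta,\alpha)$, and the identity $B(\delta,\alpha)=\Gamma(\delta)\Gamma(\alpha)/\Gamma(\alpha+\delta)$ gives exactly the stated formula. Note that the paper itself offers no proof of this lemma --- it is quoted from the reference \cite{ZE1} --- and your Beta-function computation is precisely the standard argument used there, so there is nothing to compare beyond observing that your hypothesis of a nonvanishing $\psi'$ (needed for the change of variables to be a bijection) is the one imposed in that reference.
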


\section{$\psi$-Hilfer fractional partial derivative}

In this section we introduce the $\psi$-Riemann-Liouville fractional partial integral and the $\psi$-Hilfer fractional partial derivative of a function of 
$N$ variables with respect to another function, as well as some particular cases. Using these definitions, we
introduce new versions of the Ulam-Hyers and Ulam-Hyers-Rassias stabilities. In addition, some important results are discussed.

The first interesting and important result of this paper is the $\psi$-Riemann-Liouville fractional partial integral of a function of $N$ variables with
respect to another function. The motivation for this extension comes from
Eq.(\ref{eq1}).

\begin{definition}
Let $\theta =\left( \theta _{1},\theta _{2},...,\theta  _{N}\right) $ and $\alpha =\left( \alpha _{1},\alpha _{2},...,\alpha  _{N}\right) $, where $0<\alpha _{1},\alpha _{2},...,\alpha _{N}<1$,  $N\in \mathbb{N}$. Also put $\widetilde{I}=I_{1}\times I_{2}\times  \cdot \cdot \cdot \times I_{N}=\left[\theta _{1},a_{1}\right] \times  \left[ \theta _{2},a_{2}\right] \times\cdot \cdot \cdot \times \left[  \theta _{N},a_{N}\right] ,$ where $
a_{1},a_{2},...,a_{N}$ and  $\theta _{1},\theta _{2},...,\theta _{N}$ are positive constants. Also  let $\psi \left( \cdot \right) $ be an increasing and positive monotone  function on $\left( \theta _{1},a_{1}\right] \times
\left(\theta  _{2},a_{2}\right] \times \cdots \times \left( \theta _{N},a_{N}\right]$, having a  continuous derivative $\psi ^{\prime }\left( \cdot \right) $ on $\left(  \theta _{1},a_{1}\right] ,\left( \theta _{2},a_{2}%
\right] ,...,\left(  \theta _{N},a_{N}\right] $.  The $\psi$-Riemann-Liouville partial integral of a function of  $N$ variables $u=\left( u_{1},u_{2},...,u_{N}\right) \in L^{1}\left(  \widetilde{I }\right) $ is defined by
\begin{equation}
I_{\theta ,x}^{\alpha ;\psi }u\left( x\right) =\frac{1}{\Gamma \left( \widetilde{\alpha }_{j}\right) }\int \int \cdot \cdot \cdot \int_{\widetilde{I}}\psi ^{\prime }\left( s_{j}\right) \left( \psi \left( x_{j}\right) -\psi
\left( s_{j}\right) \right) ^{\alpha _{j}-1}u\left( s_{j}\right) ds_{j},\label{eq30}
\end{equation}%
with $\psi ^{\prime }\left( s_{j}\right) \left( \psi \left( x_{j}\right)-\psi \left( s_{j}\right) \right) ^{\alpha _{j}-1}=\psi ^{\prime }\left(s_{1}\right) \left( \psi \left( x_{1}\right) -\psi \left( s_{1}\right)\right) ^{\alpha _{1}-1}\psi ^{\prime }\left( s_{2}\right) \left( \psi\left( x_{2}\right) -\psi \left( s_{2}\right) \right) ^{\alpha _{2}-1}\cdot\cdot \cdot \psi ^{\prime }\left( s_{N}\right) \left( \psi \left(x_{N}\right) -\psi \left( s_{N}\right) \right) ^{\alpha _{N}-1}$ and where we use the notations $\Gamma \left( \alpha _{j}\right) =\Gamma \left( \alpha _{1}\right) \Gamma \left( \alpha _{2}\right) \cdot \cdot \cdot \Gamma \left(\alpha _{N}\right) $, $u\left( s_{j}\right) =u\left( s_{1}\right) u\left(s_{2}\right) \cdot \cdot \cdot u\left( s_{N}\right) $ and $ds_{j}=ds_{1}ds_{2}\cdot \cdot \cdot ds_{N}$, $j\in \left\{1,2,...,N\right\} $ with $N\in \mathbb{N}$.
\end{definition}

Particular cases of this fractional partial integral, Eq.(\ref{eq30}), are presented below. 

\begin{remark}
\mbox{}
\begin{enumerate}
\item If we consider $\psi \left( x_{i}\right) =x_{i}$ in {\rm Eq.(\ref{eq30})}, we obtain the Riemann-Liouville fractional partial integral of a function of $N$ variables, given by {\rm\cite{int2}}
\begin{equation*}
I_{\theta , x}^{\alpha }u\left( x\right) = \frac{1}{\Gamma \left( \alpha_{j}\right) } \int \int \cdot \cdot \cdot \int_{\widetilde{I}}\left( x_{j}-s_{j}\right) ^{\alpha_{j}-1}u \left( s_{j}\right) ds_{j},
\end{equation*}
with $\left( x_{j}-s_{j}\right) ^{\alpha_{j}-1}=\left( x_{1}-s_{1}\right) ^{\alpha _{1}-1}\left(x_{2}-s_{2}\right)^{\alpha _{2}-1}\cdot \cdot \cdot \left(	x_{N}-s_{N}\right) ^{\alpha _{N}-1}$, $\Gamma \left(\alpha _{j}\right) =\Gamma \left( \alpha_{1}\right)	\Gamma \left( \alpha _{2}\right) \cdot \cdot \cdot \Gamma\left( \alpha _{N}\right) ,$ $u\left(s_{j}\right)	=u\left(s_{1}\right) u\left( s_{2}\right) \cdot \cdot \cdot	u\left( s_{N}\right) $and $ds_{j}=ds_{1}ds_{2}\cdot	\cdot \cdot ds_{N}$, $j\in \left\{ 1,2,...,N\right\} $ with $N\in \mathbb{N}$.

\item If we consider $\psi \left( x_{i}\right) =\ln x_{i}$ and $\theta_{i}=0$ for $ i=1,2,...,N$ in {\rm Eq.(\ref{eq30})}, we obtain the Hadamard fractional partial integral of a function of $N$ variables, given by
{\rm\cite{int1}}
\begin{equation*}
I_{\theta ,x}^{\alpha }u\left( x\right) =\frac{1}{\Gamma \left( \alpha _{j}\right) }\int \int \cdot \cdot \cdot \int_{\widetilde{I}_{0}}\left( \ln \frac{x_{j}}{s_{j}}\right) ^{\alpha _{j}-1}u\left( s_{j}\right) \frac{ds_{j}%
}{s_{j}},
\end{equation*}
with $\left( \ln \dfrac{x_{j}}{s_{j}}\right) ^{\alpha _{j}-1}=\left( \ln \dfrac{x_{1}}{s_{1}}\right) ^{\alpha _{1}-1}\left( \ln \dfrac{x_{2}}{s_{2}}\right) ^{\alpha _{2}-1}\cdot \cdot \cdot \left( \ln \dfrac{x_{N}}{s_{N}}%
\right) ^{\alpha _{N}-1}$, $\Gamma \left( \alpha _{j}\right) =\Gamma \left(\alpha _{1}\right) \Gamma \left( \alpha _{2}\right) \cdot \cdot \cdot \Gamma\left( \alpha _{N}\right) $, $u\left( s_{j}\right) =u\left( s_{1}\right)
u\left( s_{2}\right) \cdot \cdot \cdot u\left( s_{N}\right) $ and $ds_{j}=ds_{1}ds_{2}\cdot \cdot \cdot ds_{N}$, $j\in \left\{1,2,...,N\right\} $ with $N\in \mathbb{N}$.

\item If we consider $\psi \left( x_{i}\right) =x_{i}$ and $\theta _{i}=-\infty $ for $i=1,2,...,N$ in \textrm{Eq.(\ref{eq30})}, we obtain the Liouville fractional partial integral of a function of $N$ variables, given by 
\begin{equation*}
I_{\theta ,x}^{\alpha }u\left( x\right) =\frac{1}{\Gamma \left( \alpha _{j}\right) }\int \int \cdot \cdot \cdot \int_{\widetilde{I}_{\infty }}\left( x_{j}-s_{j}\right) ^{\alpha_{j}-1}u\left( s_{j}\right) ds_{j},
\end{equation*}%
with $\left( x_{j}-s_{j}\right) ^{\alpha _{j}-1}=\left( x_{1}-s_{1}\right) ^{\alpha _{1}-1}\left( x_{2}-s_{2}\right) ^{\alpha _{2}-1}\cdot \cdot \cdot \left( x_{N}-s_{N}\right) ^{\alpha _{N}-1}$, $\Gamma \left( \alpha
_{j}\right) =\Gamma \left( \alpha _{1}\right) \Gamma \left( \alpha _{2}\right) \cdot \cdot \cdot \Gamma \left( \alpha _{N}\right) ,$ $u\left(s_{j}\right) =u\left( s_{1}\right) u\left( s_{2}\right) \cdot \cdot \cdot
u\left( s_{N}\right) $ and $ds_{j}=ds_{1}ds_{2}\cdot \cdot \cdot ds_{N}$, $ j\in \left\{ 1,2,...,N\right\} $ with $N\in \mathbb{N}$.

\end{enumerate}
\end{remark}

It is possible to obtain other fractional partial integrals, that is, Erd\`elyi-Kober fractional partial integral, Katugampola fractional partial integral, Weyl fractional partial integral, among others, as well as Sousa and Oliveira \cite{ZE1}, in a recent work, introduced the $\psi$-Hilfer fractional integral, using different choices for $ \psi \left( \cdot \right) $ and parameters $d_{i}$. Moreover, each fractional partial integral obtained here, is an extension of its respective fractional integral {\rm\cite{HER,KSTJ,SAMKO,ZE1}}.

As an application, taking $N=2$ and $\theta_{1}=\theta_{2}=0$ in {\rm {Eq.(\ref{eq30})}} we have the fractional partial integral that will be used in what follows:
\begin{eqnarray}
I_{\theta }^{\alpha ;\psi }u\left( x_{1},x_{2}\right)  &=&\frac{1}{\Gamma \left( \alpha _{1}\right) \Gamma \left( \alpha _{2}\right) }\int_{0}^{x_{1}}\int_{0}^{x_{2}} \psi ^{\prime }\left( s_{1}\right) \psi^{\prime }\left( s_{2}\right) \left( \psi \left( x_{1}\right) -\psi \left(s_{1}\right) \right) ^{\alpha _{1}-1}\notag\label{eq31} \\&&\left( \psi \left( x_{2}\right) -\psi \left( s_{2}\right) \right) ^{\alpha_{2}-1}u\left( s_{1},s_{2}\right) ds_{1}ds_{2},  
\end{eqnarray}
with $0<\alpha _{1},\alpha _{2}\leq 1$.

Also, we have 
\begin{equation}\label{eq32}
I_{0+,x_{1}}^{\alpha _{1};\psi }u\left( x_{1},x_{2}\right) =\frac{1}{\Gamma \left( \alpha _{1}\right) }\int_{0}^{x_{1}}\psi ^{\prime }\left( s_{1}\right) \left( \psi \left( x_{1}\right) -\psi \left( s_{1}\right)
\right) ^{\alpha _{1}-1}u\left( s_{1},s_{2}\right) ds_{1} 
\end{equation}
and
\begin{equation}\label{eq33}
I_{0+,x_{2}}^{\alpha _{2};\psi }u\left( x_{1},x_{2}\right) =\frac{1}{\Gamma \left( \alpha _{2}\right) }\int_{0}^{x_{2}}\psi ^{\prime }\left( s_{2}\right) \left( \psi \left( x_{2}\right) -\psi \left( s_{2}\right)
\right) ^{\alpha _{2}-1}u\left( s_{1},s_{2}\right) ds_{2},
\end{equation}
with $0<\alpha _{1},\alpha _{2}\leq 1$.

Sousa and Oliveira \cite{ZE1} used the Riemann-Liouville fractional integral with respect to a function (one variable) to introduce the $\psi$-Hilfer fractional derivative (one variable); we follow here a similar 
procedure, starting with a fractional integral of $N$ variables, to introduce the $\psi$-Hilfer fractional partial derivative of $N$ variables. 

\begin{definition} Let $\theta =\left(\theta_{1},\theta_{2},...,\theta_{N}\right) $ and $\alpha =\left(
\alpha _{1},\alpha _{2},...,\alpha _{N}\right) $, where	$0<\alpha_{1},\alpha _{2},...,\alpha _{N}<1$, $N\in \mathbb{N}$.  Put $\widetilde{I}=I_{a_{1}}\times I_{a_{2}}\times \cdot \cdot \cdot \times I_{a_{N}}=\left[ \theta_{1},a_{1}\right] \times \left[\theta_{2},a_{2}\right] \times \cdot \cdot \cdot \times \left[
\theta_{N},a_{N}\right]$, where $a_{1},a_{2},...,a_{N}$ and	$\theta_{1}, \theta_{2},..., \theta_{N}$ are positive constants. Also, let functions $u,\psi \in C^{n}\left( \widetilde{I}, \mathbb{R} \right)$ and such that $\psi$ is increasing and $\psi ^{\prime }\left( x_{i}\right) \neq 0,$ $i\in \left\{ 1,2,...,N\right\}$, $x_{i}\in \widetilde{I},$ $N\in  \mathbb{N}$. The $\psi$-Hilfer fractional partial derivative of a function of $N$
variables, of order $\alpha $ and type $0\leq \beta _{1},\beta_{2},...,\beta _{N}\leq 1$, denoted by $^{H}\mathbb{D}_{\theta, \widetilde{x}_{j} }^{\alpha ,\beta ;\psi }\left( \cdot \right)$, is
defined by 
\begin{equation}
^{H}\mathbb{D}_{\theta ,x}^{\alpha ,\beta ;\psi }u\left( x\right) =I_{\theta ,x_{j}}^{\beta \left( 1-\alpha \right) ;\psi }\left( \frac{1}{\psi ^{\prime }\left( x_{j}\right) }\frac{\partial ^{N}}{\partial x_{j}}\right) I_{\theta ,x_{j}}^{\left( 1-\beta \right) \left( 1-\alpha \right) ;\psi }u\left( x_{j}\right) ,  \label{eq34}
\end{equation}
with $\partial x_{j}=\partial x_{1}\partial x_{2}\cdot \cdot \cdot \partial x_{N}$ and $\psi ^{\prime }\left( x_{j}\right) =\psi ^{\prime }\left( x_{1}\right) \psi ^{\prime }\left( x_{2}\right) \cdot \cdot \cdot \psi
^{\prime }\left( x_{N}\right) $, $j\in \left\{ 1,2,...,N\right\} $, $N\in \mathbb{N}$.
\end{definition}

In what follows a few particular cases of fractional partial derivatives are presented. It is important to note that the $\psi$-Hilfer fractional partial derivative contains a wide class of fractional partial derivatives, each one is an extension of the fractional derivative of one variable. 

\begin{remark}

\mbox{}

\begin{enumerate}

\item Taking the limit $\beta \rightarrow 0$ on both sides of \textrm{{Eq.(\ref{eq34})}}, we have the $\psi $-Riemann-Liouville fractional partial derivative of $N$ variables, given by 
\begin{equation*}
^{RL}\mathbb{D}_{\theta ,x}^{\alpha ;\psi }u\left( x\right) =\left( \frac{1}{\psi ^{\prime }\left( x_{j}\right) }\frac{\partial ^{N}}{\partial x_{j}}\right) I_{\theta ,x_{j}}^{1-\alpha ;\psi }u\left( x_{j}\right) ,
\end{equation*}%
with $\partial x_{j}=\partial x_{1}\partial x_{2}\cdot \cdot \cdot \partial x_{N}$ and $\psi ^{\prime }\left( x_{j}\right) =\psi ^{\prime }\left( x_{1}\right) \psi ^{\prime }\left( x_{2}\right) \cdot \cdot \cdot \psi
^{\prime }\left( x_{N}\right) $, $j\in \left\{ 1,2,...,N\right\} $, $N\in  \mathbb{N}$.

\item Taking the limit $\beta \rightarrow 1$ on both sides of \textrm{{Eq.(\ref{eq34})}}, we have the $\psi $-Caputo fractional partial derivative of $N$ variables, given by 
\begin{equation*}
^{C}\mathbb{D}_{\theta ,x}^{\alpha ;\psi }u\left( x\right) =I_{\theta ,x_{j}}^{1-\alpha ;\psi }\left( \frac{1}{\psi ^{\prime }\left( x_{j}\right) } \frac{\partial ^{N}}{\partial x_{j}}\right) u\left( x_{j}\right) ,
\end{equation*}%
with $\partial x_{j}=\partial x_{1}\partial x_{2}\cdot \cdot \cdot \partial x_{N}$ and $\psi ^{\prime }\left( x_{j}\right) =\psi ^{\prime }\left( x_{1}\right) \psi ^{\prime }\left( x_{2}\right) \cdot \cdot \cdot \psi
^{\prime }\left( x_{N}\right) $, $j\in \left\{ 1,2,...,N\right\} $, $N\in \mathbb{N}$.

\item Taking the limit $\beta \rightarrow 0$ on both sides of \textrm{{Eq.(\ref{eq34})}} and choosing $\psi \left( x_{j}\right) =x_{j}$, $j\in \left\{1,2,...,N\right\} $, we have the Riemann-Liouville fractional partial
derivative of $N$ variables, given by \textrm{\cite{int2}} 
\begin{equation*}
^{RL}\mathcal{D}_{\theta ,x}^{\alpha ;\psi }u\left( x\right) =\left( \frac{\partial ^{N}}{\partial x_{j}}\right) I_{\theta ,x_{j}}^{1-\alpha ;\psi}u\left( x_{j}\right) ,
\end{equation*}
with $\partial x_{j}=\partial x_{1}\partial x_{2}\cdot \cdot \cdot \partial x_{N}$, for $N\in \mathbb{N}$.

\item Taking the limit $\beta \rightarrow 1$ on both sides of \textrm{{Eq.(\ref{eq34})}} and choosing $\psi \left( x_{j}\right) =x_{j}$, $j\in \left\{1,2,...,N\right\} $, we have the Caputo fractional partial derivative of $N$ variables, given by \textrm{\cite{B1}} 
\begin{equation*}
^{C}\mathcal{D}_{\theta ,x}^{\alpha ;\psi }u\left( x\right) =I_{\theta ,x_{j}}^{1-\alpha ;\psi }\left( \frac{\partial ^{N}}{\partial x_{j}}\right) u\left( x_{j}\right) ,
\end{equation*}%
with $\partial x_{j}=\partial x_{1}\partial x_{2}\cdot \cdot \cdot \partial x_{N}$, for $N\in \mathbb{N}$.\end{enumerate}
\end{remark}

We have presented only a few particular cases of fractional partial derivatives. It is possible to obtain many other fractional partial derivatives, starting from different choices for function $\psi \left( \cdot \right) $ and taking the limits $\beta \rightarrow 0$ and $\beta \rightarrow 1$, e.g. the Hadamard fractional partial derivative, Caputo-Hadamard fractional partial derivative, Caputo-Katugampola fractional partial derivative, among others. In all these cases, each fractional partial derivative obtained is an extension of its corresponding fractional derivative {\rm\cite{HER,KSTJ,SAMKO,OLMS,ZE1}}.

Now, choosing $N=2$ in {\rm {Eq.(\ref{eq34})}}, we find the fractional partial derivative that will be used in this paper:
\begin{equation}\label{eq35}
^{H}\mathbb{D}_{\theta }^{\alpha ,\beta ;\psi }u\left( x_{1},x_{2}\right) =I_{\theta }^{\beta \left( 1-\alpha \right) ;\psi }\left( \frac{1}{\psi ^{\prime } \left( x_{1}\right) \psi ^{\prime }\left( x_{2}\right) }\frac{\partial ^{2} }{\partial x_{1}\partial x_{2}}\right) I_{\theta }^{\left( 1-\beta \right) 
\left( 1-\alpha \right) ;\psi }u\left( x_{1},x_{2}\right).
\end{equation}
We shall also use the following notation: 
\begin{equation}\label{eq36}
^{H}\mathbb{D}_{\theta }^{\alpha ,\beta ;\psi }u\left( x_{1},x_{2}\right) =\frac{\partial _{\beta ;\psi }^{2\alpha }u}{\partial _{\beta ;\psi }x^{\alpha }\partial_{\beta ;\psi }y^{\alpha }}\left( x_{1},x_{2}\right).
\end{equation}

Let $a,b\in \left( 0,\infty \right] ,$ $\varepsilon >0,$ $\varphi \in C\left( \left[ 0,a\right) \times \left[ 0,b\right), \mathbb{R}_{+}\right)$ and let $(\mathbb{B}, \left\vert \cdot\right\vert)$ be a real or complex Banach space.

Consider also the following inequalities, of paramount importance, which will be used to introduce the Ulam-Hyers and Ulam-Hyers-Rassias stabilities: 
\begin{equation}\label{eq4}
\left\vert \dfrac{\partial _{\beta ;\psi }^{2\alpha }v}{\partial _{\beta ;\psi }^{\alpha }x^{\alpha }\partial _{\beta ;\psi }^{\alpha }y^{\alpha }} 
\left( x,y\right) -f\left( x,y,v\left( x,y\right) ,\frac{\partial _{\beta
;\psi }^{\alpha }v}{\partial _{\beta ;\psi }^{\alpha }x^{\alpha }}\left( x,y\right) ,
\frac{\partial _{\beta ;\psi }^{\alpha }v}{\partial _{\beta ;\psi }^{\alpha }y^{\alpha }}\left( x,y\right) \right) \right\vert \leq
\varepsilon ,
\end{equation}
$x\in \left[ 0,a\right) ,$ $y\in \left[ 0,b\right)$.
\begin{equation}\label{eq5}
\left\vert \frac{\partial _{\beta ;\psi }^{2\alpha }v}{\partial _{\beta ;\psi }^{\alpha }x^{\alpha }\partial _{\beta ;\psi }^{\alpha }y^{\alpha }} 
\left( x,y\right) -f\left( x,y,v\left( x,y\right) ,\frac{\partial _{\beta
;\psi }^{\alpha }v}{\partial _{\beta ;\psi }^{\alpha }x^{\alpha }}\left( x,y\right) ,
\frac{\partial _{\beta ;\psi }^{\alpha }v}{\partial _{\beta ;\psi }^{\alpha }y^{\alpha }}\left( x,y\right) \right) \right\vert \leq
\varphi \left( x,y\right) ,
\end{equation}
$x\in \left[ 0,a\right) ,$ $y\in \left[ 0,b\right)$.
\begin{equation}\label{eq6}
\left\vert \frac{\partial _{\beta ;\psi }^{2\alpha }v}{\partial _{\beta ;\psi }^{\alpha }x^{\alpha }\partial _{\beta ;\psi }^{\alpha }y^{\alpha }} 
\left( x,y\right) -f\left( x,y,v\left( x,y\right) ,\frac{\partial _{\beta
;\psi }^{\alpha }v}{\partial _{\beta ;\psi }^{\alpha }x^{\alpha }}\left( x,y\right) ,\frac{\partial _{\beta ;\psi }^{\alpha }v}{\partial _{\beta ;\psi }^{\alpha }y^{\alpha }}
\left( x,y\right) \right) \right\vert \leq
\varepsilon \varphi \left( x,y\right) ,
\end{equation}
$x\in \left[ 0,a\right) ,$ $y\in \left[ 0,b\right)$, with $\dfrac{1}{2}<\alpha \leq 1$ and $0\leq \beta \leq 1$.

\begin{definition} A function $u$ is a solution of \rm{Eq.(\ref{eq3})}, if
	$u\in C^{1}\left(\left[ 0,a\right) \times \left[ 0,b\right),
	\mathbb{B}\right) $ and $\dfrac{\partial _{\beta ;\psi }^{2\alpha
	}u}{\partial _{\beta ;\psi }^{\alpha }x^{\alpha }\partial _{\beta ;\psi
	}^{\alpha }y^{\alpha }}\left( x,y\right) \in C\left( \left[ 0,a\right)
	\times \left[ 0,b\right), \mathbb{B}\right), $ with $\dfrac{1}{2}
	<\alpha \leq 1$ and $0\leq \beta \leq 1$.
\end{definition}

\begin{definition} A solution of {\rm{Eq.(\ref{eq3})}} admits Ulam-Hyers
	stability if there exist real numbers $C_{f}^{1},C_{f}^{2}$ and $C_{f}^{3}>0$
	such that, for any $\varepsilon >0$ and for any solution $v$ to the
	inequality {\rm{Eq.(\ref{eq4})}}, we have 

\begin{equation*}
\left\vert v\left( x,y\right) -u\left( x,y\right) \right\vert \leq C_{f}^{1}\varepsilon ,\text{ }\forall x\in \left[ 0,a\right) ,\text{ } \forall y\in \left[ 0,b\right) ,
\end{equation*}
\begin{equation*}
\left\vert \dfrac{\partial _{\beta ;\psi }^{\alpha }v}{\partial _{\beta ;\psi }^{\alpha }x^{\alpha }}\left( x,y\right) -
\dfrac{\partial _{\beta ;\psi }^{\alpha }u}{\partial _{\beta ;\psi }^{\alpha }x^{\alpha }}\left(
x,y\right) \right\vert \leq C_{f}^{2}\varepsilon ,\text{ }\forall x\in \left[ 0,a\right) ,\text{ }\forall y\in \left[ 0,b\right) ,
\end{equation*}
\begin{equation*}
\left\vert \dfrac{\partial _{\beta ;\psi }^{\alpha }v}{\partial _{\beta ;\psi }^{\alpha }y^{\alpha }}\left( x,y\right) -
\dfrac{\partial _{\beta ;\psi }^{\alpha }u}{\partial _{\beta ;\psi }^{\alpha }y^{\alpha }}\left(
x,y\right) \right\vert \leq C_{f}^{3}\varepsilon ,\text{ }\forall x\in \left[ 0,a\right) ,\text{ }\forall y\in \left[ 0,b\right) ,
\end{equation*}
with $\dfrac{1}{2}<\alpha \leq 1$ and $0\leq \beta \leq 1$.
\end{definition}

\begin{definition} A solution of {\rm{Eq.(\ref{eq3})}} admits generalized
	Ulam-Hyers-Rassias stability if there exist real numbers $C_{f,\varphi
	}^{1},C_{f,\varphi }^{2}$ and $C_{f,\varphi }^{3}>0$ such that, for any
	$\varepsilon >0$ and for any solution $v$ to the inequality
	{\rm{Eq.(\ref{eq5})}}, we have 
	
\begin{equation*}
\left\vert v\left( x,y\right) -u\left( x,y\right) \right\vert \leq C_{f,\varphi }^{1}\varphi \left( x,y\right) ,\text{ }\forall x\in \left[ 0,a\right) ,
\text{ }\forall y\in \left[ 0,b\right) ,
\end{equation*}
\begin{equation*} \label{eq11}
\left\vert \frac{\partial _{\beta ;\psi }^{\alpha }v}{\partial _{\beta ;\psi }^{\alpha }x^{\alpha }}\left( x,y\right) -
\frac{\partial _{\beta ;\psi }^{\alpha }u}{\partial _{\beta ;\psi }^{\alpha }x^{\alpha }}\left(
x,y\right) \right\vert \leq C_{f,\varphi }^{2}\varphi \left( x,y\right), \text{ }\forall x\in \left[ 0,a\right) ,\text{ }\forall y\in \left[ 0,b\right) ,
\end{equation*}

\begin{equation*}\label{eq12}
\left\vert \frac{\partial _{\beta ;\psi }^{\alpha }v}{\partial _{\beta ;\psi }^{\alpha }y^{\alpha }}\left( x,y\right) -
\frac{\partial _{\beta ;\psi }^{\alpha }u}{\partial _{\beta ;\psi }^{\alpha }y^{\alpha }}\left(
x,y\right) \right\vert \leq C_{f,\varphi }^{3}\varphi \left( x,y\right), \text{ }\forall x\in \left[ 0,a\right) ,\text{ }\forall y\in \left[ 0,b\right) ,
\end{equation*}
with $\dfrac{1}{2}<\alpha \leq 1$ and $0\leq \beta \leq 1$.

\end{definition}

\begin{remark}\label{re3} A function $v$ is a solution to the inequality {\rm{Eq.(\ref{eq4})}} if, and only if, there exists a function 
$g\in C\left( \left[ 0,a\right)\times \left[ 0,b\right), \mathbb{B}\right)$, which depends on $v$, such that 
\begin{enumerate}
\item  For all $\varepsilon >0$, $\left\vert g\left( x,y\right) \right\vert \leq \varepsilon$, for all $x\in \left[ 0,a\right)$, for all $y\in \left[ 0,b\right)$;

\item For all $x\in \left[ 0,a\right) ,$ for all $y\in \left[ 0,b\right)$,
\begin{equation*}
\frac{\partial _{\beta ;\psi }^{2\alpha }v}{\partial _{\beta ;\psi }^{\alpha }x^{\alpha }\partial _{\beta ;\psi }^{\alpha }y^{\alpha }}
\left( x,y\right) =f\left( x,y,v\left( x,y\right) ,\frac{\partial _{\beta ;\psi }^{\alpha }v}{ \partial _{\beta ;\psi }^{\alpha }x^{\alpha }}
\left( x,y\right) ,\frac{ \partial _{\beta ;\psi }^{\alpha }v}{\partial _{\beta ;\psi }^{\alpha }y^{\alpha }}\left( x,y\right) \right) +g\left( x,y\right) 
\end{equation*}
with $\dfrac{1}{2}<\alpha \leq 1$ and $0\leq \beta \leq 1$.
\end{enumerate}
\end{remark}

\begin{remark} A function $v$ is a solution to the inequality  {\rm{Eq.(\ref{eq5})}} if, and only if, there exists a function $g\in 
C\left( \left[ 0,a\right) \times \left[ 0,b\right), \mathbb{B}\right)$, which depends on $v$, such that
\begin{enumerate}
\item  $\left\vert g\left( x,y\right) \right\vert \leq \varphi \left( x,y\right)$, for all $x\in \left[ 0,a\right)$, for all $y\in \left[ 0,b\right)$;

\item For all $x\in \left[ 0,a\right)$, for all $y\in \left[ 0,b\right)$,
\begin{equation*}
\frac{\partial _{\beta ;\psi }^{2\alpha }v}{\partial _{\beta ;\psi }^{\alpha }x^{\alpha }\partial _{\beta ;\psi }^{\alpha }y^{\alpha }}\left( x,y\right) 
=f\left( x,y,v\left( x,y\right) ,\frac{\partial _{\beta ;\psi }^{\alpha }v}{ \partial _{\beta ;\psi }^{\alpha }x^{\alpha }}\left( x,y\right) ,
\frac{ \partial _{\beta ;\psi }^{\alpha }v}{\partial _{\beta ;\psi }^{\alpha }y^{\alpha }}\left( x,y\right) \right) +g\left( x,y\right) 
\end{equation*}
with $\dfrac{1}{2}<\alpha \leq 1$ and $0\leq \beta \leq 1.$
\end{enumerate}
\end{remark}

\begin{remark} A function $v$ is a solution to the inequality  {\rm{Eq.(\ref{eq6})}} if, and only if, there exists a function $g\in 
C\left( \left[ 0,a\right) \times \left[ 0,b\right), \mathbb{B}\right)$, which depends on $v$, such that
\begin{enumerate}

\item  For all $\varepsilon >0$, $\left\vert g\left( x,y\right) \right\vert \leq \varepsilon \varphi \left( x,y\right)$, for all 
$x\in \left[ 0,a\right)$, for all $y\in \left[ 0,b\right)$;

\item For all $x\in \left[ 0,a\right)$, for all $y\in \left[ 0,b\right)$, 
\begin{equation*}
\frac{\partial _{\beta ;\psi }^{2\alpha }v}{\partial _{\beta ;\psi }^{\alpha }x^{\alpha }\partial _{\beta ;\psi }^{\alpha }y^{\alpha }}
\left( x,y\right) =f\left( x,y,v\left( x,y\right) ,\frac{\partial _{\beta ;\psi }^{\alpha }v}{\partial _{\beta ;\psi }^{\alpha }x^{\alpha }}
\left( x,y\right) ,\frac{\partial _{\beta ;\psi }^{\alpha }v}{\partial _{\beta ;\psi }^{\alpha }y^{\alpha }}\left( x,y\right) \right) +g\left( x,y\right) 
\end{equation*}
with $\dfrac{1}{2}<\alpha \leq 1$ and $0\leq \beta \leq 1$.
\end{enumerate}
\end{remark}

We recall that, in this paper, we have used the notation
\begin{equation}\label{eq13}
\begin{array}{ccc}
u_{1}\left( x,y\right) =\dfrac{\partial _{\beta ;\psi }^{\alpha }v}{\partial _{\beta ;\psi }^{\alpha }x^{\alpha }}\left( x,y\right)  &  & u_{2}\left( x,y\right) =\dfrac{\partial _{\beta ;\psi }^{\alpha }v}{\partial _{\beta
;\psi }^{\alpha }x^{\alpha }}\left( x,y\right)  \\ 
&  &  \\ 
v_{1}\left( x,y\right) =\dfrac{\partial _{\beta ;\psi }^{\alpha }v}{\partial _{\beta ;\psi }^{\alpha }x^{\alpha }}\left( x,y\right)  &  & v_{2}\left( x,y\right) =\dfrac{\partial _{\beta ;\psi }^{\alpha }v}{\partial _{\beta
;\psi }^{\alpha }x^{\alpha }}\left( x,y\right) 
\end{array}
\text{ }
\end{equation}
with $\dfrac{1}{2}<\alpha \leq 1$ and $0\leq \beta \leq 1.$

\begin{lemma}\label{LE2} Let $0< \alpha _{1},\alpha_{2},...,\alpha _{N} <1$ and $\delta _{1},\delta _{2},...,\delta _{N}>0$. If $u\left(x_{j}\right) =\left( \psi \left(x_{j}\right)-\psi \left( 0\right) \right) ^{\delta_{j}}-1$ with	$u\left(x\right) =u\left(x_{1},x_{2},...,x_{N}\right)$ and $\left( \psi \left(x_{j}\right) -\psi \left(0\right) \right) ^{\delta_{j}-1}=\left( \psi \left(x_{1}\right) -\psi \left( 0\right) \right)^{\delta _{1}-1}\left( \psi\left( x_{2}\right) -\psi \left( 0\right)\right) ^{\delta _{2}-1}\cdot	\cdot \cdot \left( \psi \left( x_{N}\right) -\psi \left( 0\right)\right) ^{\delta _{N}-1}$, then
\begin{eqnarray}\label{jo}
I_{\theta }^{\alpha ;\psi }u\left(x\right) =\overset{N}{\underset{j=1}{\prod }}\frac{\Gamma \left( \delta_{j}\right) }{\Gamma \left(\alpha_{j}+\delta_{j}\right) }\left(\psi \left(x_{j}\right) -\psi \left( 0\right) \right)^{\alpha_{j}+\delta_{j}-1},
\end{eqnarray}
with $N\in \mathbb{N}$.
\end{lemma}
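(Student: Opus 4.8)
The plan is to reduce the $N$-variable identity to $N$ independent applications of the one-variable Lemma \ref{LE1}. The key observation is that, under the product conventions fixed after Eq.(\ref{eq30}), both the kernel defining $I_{\theta}^{\alpha;\psi}$ and the function $u$ factorize into products of functions each depending on a single variable $s_j$; therefore the multiple integral splits, by Fubini's theorem, into a product of one-dimensional integrals, each of which is handled verbatim by Lemma \ref{LE1} with base point $a=0$.

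First I would make the product structure explicit. By the notational conventions accompanying Eq.(\ref{eq30}) and the factorized form of $u$, the integrand equals $\prod_{j=1}^{N}\psi'(s_j)\bigl(\psi(x_j)-\psi(s_j)\bigr)^{\alpha_j-1}\bigl(\psi(s_j)-\psi(0)\bigr)^{\delta_j-1}$ and the prefactor equals $\prod_{j=1}^{N}\Gamma(\alpha_j)^{-1}$. Since $u\in L^{1}(\widetilde{I})$ and the integrand is a product of integrable one-variable functions, Fubini's theorem lets me write
\begin{equation*}
I_{\theta}^{\alpha;\psi} u(x) = \prod_{j=1}^{N} \frac{1}{\Gamma(\alpha_j)} \int_0^{x_j} \psi'(s_j)\bigl(\psi(x_j)-\psi(s_j)\bigr)^{\alpha_j-1}\bigl(\psi(s_j)-\psi(0)\bigr)^{\delta_j-1}\, ds_j.
\end{equation*}

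Each factor is precisely the one-variable $\psi$-Riemann-Liouville integral $I_{0+}^{\alpha_j;\psi} f_j(x_j)$ of the function $f_j(t)=\bigl(\psi(t)-\psi(0)\bigr)^{\delta_j-1}$, taken with $a=0$. Applying Lemma \ref{LE1} to each index $j$ gives
\begin{equation*}
\frac{1}{\Gamma(\alpha_j)} \int_0^{x_j} \psi'(s_j)\bigl(\psi(x_j)-\psi(s_j)\bigr)^{\alpha_j-1}\bigl(\psi(s_j)-\psi(0)\bigr)^{\delta_j-1}\, ds_j = \frac{\Gamma(\delta_j)}{\Gamma(\alpha_j+\delta_j)}\bigl(\psi(x_j)-\psi(0)\bigr)^{\alpha_j+\delta_j-1},
\end{equation*}
and taking the product of these $N$ identities yields Eq.(\ref{jo}).

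The computation is routine once the factorization is in place; the only point demanding care is the justification of Fubini's theorem, which is where I expect to spend most of the effort. To secure it I would verify absolute integrability of the integrand over $\widetilde{I}$, which follows from $u\in L^{1}(\widetilde{I})$, the continuity and positivity of $\psi'$, and the integrability of each singular factor $\bigl(\psi(x_j)-\psi(s_j)\bigr)^{\alpha_j-1}$ near $s_j=x_j$ guaranteed by $\alpha_j>0$. Once absolute integrability is established, the interchange of the order of integration and the splitting into a product are immediate, so no genuine obstacle remains.
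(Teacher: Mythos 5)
Your proposal is correct, but it is organized differently from the paper's proof. The paper establishes Eq.~(\ref{jo}) by induction on $N$: the base case $N=1$ is Lemma \ref{LE1}, the case $N=2$ is worked out explicitly by integrating first in $s_2$ and then in $s_1$, and the induction step peels off the last variable $s_N$, applies the inductive hypothesis to the first $N-1$ integrations, and finishes with one further application of Lemma \ref{LE1} to the remaining one-dimensional integral. Unrolled, that induction is exactly your argument---both proofs rest on the factorization of the kernel $\psi'(s_j)\left(\psi(x_j)-\psi(s_j)\right)^{\alpha_j-1}$ and of $u$ across the variables, and both reduce the claim to $N$ applications of the one-variable Lemma \ref{LE1}---but you split the multiple integral into a product of one-dimensional integrals in a single stroke, whereas the paper iterates the integration one variable at a time and never comments on the interchange of integrals that this iteration tacitly uses. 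What your route buys is precisely that missing justification, plus brevity; what the induction buys is that it avoids invoking Fubini by name and exhibits the $N=2$ computation (the case actually used later in Theorem \ref{te2}) concretely. One simplification you could make: since $\psi'>0$ and every factor $\left(\psi(x_j)-\psi(s_j)\right)^{\alpha_j-1}\left(\psi(s_j)-\psi(0)\right)^{\delta_j-1}$ is nonnegative on the domain of integration, Tonelli's theorem applies directly to the product of nonnegative one-variable functions, so the preliminary verification of absolute integrability that you flag as the main effort is unnecessary---the finiteness of each one-dimensional factor (after the substitution $t=\psi(s_j)$ it is a Beta-type integral, finite because $\alpha_j,\delta_j>0$) falls out of the same computation, and the hypothesis $u\in L^1(\widetilde{I})$ is not needed beyond this.
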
 

\begin{proof} The proof is realized by induction on $N$. According to Lemma \ref{LE1}, Eq.(\ref{jo}) is valid for $N=1$.

For $N=2$, we have
\begin{eqnarray*}
&&I_{\theta }^{\alpha ;\psi }u\left( x_{1},x_{2}\right)  \\
&=& \frac{1}{\Gamma \left( \alpha _{1}\right) \Gamma \left( \alpha _{2}\right) }\int_{0}^{x_{1}}\int_{0}^{x_{2}}\psi ^{\prime }\left( s_{1}\right) \psi^{
\prime }\left( s_{2}\right) \left( \psi \left( x_{1}\right) -\psi \left(s_{1}\right) \right) ^{\alpha _{1}-1}\left( \psi \left( x_{2}\right) -\psi\left( s_{2}\right) 
\right) ^{\alpha _{2}-1}  \nonumber \\
&&\left( \psi \left( s_{1}\right) -\psi \left( 0\right) \right) ^{\delta _{1}-1}\left( \psi \left( s_{2}\right) -\psi \left( 0\right) \right)^{\delta _{2}-1}ds_{2}ds_{1} 
\nonumber \\
&=&\frac{\Gamma \left( \delta _{2}\right) \left( \psi \left( x_{2}\right)-\psi \left( 0\right) \right) ^{\alpha _{2}+\delta _{2}-1}}{\Gamma \left(\alpha _{2}+
\delta _{2}\right) }\int_{0}^{x_{1}}\psi ^{\prime }\left(
s_{1}\right) \left( \psi \left( x_{1}\right) -\psi \left( s_{1}\right)\right) ^{\delta _{1}-1}ds_{1}  \nonumber \\
&=&\frac{\Gamma \left( \delta _{1}\right) \Gamma \left( \delta _{2}\right) }{\Gamma \left( \alpha _{1}+\delta _{1}\right) \Gamma \left( \alpha_{2}+\delta _{2}\right) }
\left( \psi \left( x_{1}\right) -\psi \left(
0\right) \right) ^{\alpha _{1}+\delta _{1}-1}\left( \psi \left( x_{2}\right)-\psi \left( 0\right) \right) ^{\alpha _{2}+\delta _{2}-1}.
\end{eqnarray*}

Suppose it is true for $N-1$, i.e., 
\begin{eqnarray*}
I_{\theta }^{\alpha ;\psi }u\left(x\right)  &=&\frac{\Gamma\left( \delta _{1}\right) }{\Gamma \left( \alpha _{1}+\delta _{1}\right) } \left( \psi \left( x_{1}\right) -\psi \left( 0\right) \right) ^{\alpha
_{1}+\delta _{1}-1}\frac{\Gamma \left( \delta _{2}\right) }{\Gamma \left(\alpha _{2}+\delta _{2}\right) }\left( \psi \left( x_{2}\right) -\psi \left(0\right) \right) ^{\alpha _{2}+\delta _{2}-1}  \nonumber \\
&&\cdot \cdot \cdot \frac{\Gamma \left( \delta _{1}\right) }{\Gamma \left(\alpha _{N-1}+\delta _{N-1}\right) }\left( \psi \left( x_{N-1}\right) -\psi\left( 0\right) \right) ^{\alpha _{N-1}+\delta _{N-1}-1}.
\end{eqnarray*}

Let's prove that it holds for $N$. Indeed, for $u\left(x_{j}\right) =u\left( x_{1},x_{2},...,x_{N}\right) $, $\Gamma \left(\alpha_{j}\right) =\Gamma \left( \alpha _{1}\right) \Gamma\left( \alpha _{2}\right) \cdot \cdot \cdot \Gamma \left( \alpha _{N-1}\right) $, $\psi ^{\prime }\left(s_{j}\right) \left( \psi \left( x_{j}\right) -\psi \left(s_{j}\right)\right) ^{{\alpha }_{j}-1}=\psi ^{\prime }\left( s_{1}\right) \left( \psi \left( x_{1}\right) -\psi \left( s_{1}\right) \right) ^{\alpha _{1}-1}\cdot 
\cdot \cdot \psi ^{\prime }\left( s_{N-1}\right) \left( \psi \left( x_{N-1}\right) -\psi \left( s_{N-1}\right) \right) ^{\alpha _{N-1}-1}$ we have
\begin{eqnarray*}
&&I_{\theta ,x}^{\alpha ;\psi }u\left(x_{j}\right)\\  &=&\frac{1}{\Gamma \left( \alpha_{j}\right) \Gamma\left( 
\alpha _{N}\right) }\int_{0}^{x_{j}}\int_{0}^{x_{N}}\psi^{\prime }\left( s_{j}\right) \left( \psi \left( x_{j}\right) -\psi \left( s_{j}\right) \right) ^{{\alpha }_{j}-1}\left( \psi \left(s_{j}\right) -\psi \left(
0\right) \right) ^{{\delta }_{j}-1} \\
&&\psi ^{\prime }\left( s_{N}\right) \left( \psi \left( x_{N}\right) -\psi\left( s_{N}\right) \right) ^{\alpha _{N}-1}\left( \psi \left( s_{N}\right)-\psi 
\left( 0\right) \right) ^{\delta _{N}-1}ds_{j}ds_{N} \\
&=&\frac{\Gamma \left( \delta _{1}\right) \Gamma \left( \delta _{2}\right)\cdot \cdot \cdot \Gamma \left( \delta _{N-1}\right) }{\Gamma \left( \alpha_{1}+\delta _{1}\right)\Gamma \left( \alpha _{2}+\delta _{2}\right) \cdot
\cdot \cdot \Gamma \left( \alpha _{N-1}+\delta _{N-1}\right) }\frac{1}{\Gamma \left( \alpha _{N}\right) }\left( \psi \left( x_{1}\right) -\psi\left( 0\right) \right) ^{\alpha _{1}+\delta _{1}-1}\times  \\
&&\left( \psi \left( x_{2}\right) -\psi \left( 0\right) \right) ^{\alpha_{2}+\delta _{2}-1}\cdot \cdot \cdot \left( \psi \left( x_{N-1}\right) -\psi\left( 0\right) \right) ^{\alpha _{N-1}+\delta _{N-1}-1}\times  \\
&&\int_{0}^{x_{N}}\psi ^{\prime }\left( s_{N}\right) \left( \psi \left(x_{N}\right) -\psi \left( s_{N}\right) \right) ^{\alpha _{N}-1}\left( \psi\left( s_{N}\right) -\psi \left( 0\right) \right) ^{\delta _{N}-1}ds_{N} \\
&=&\overset{N}{\underset{j=1}{\prod }}\frac{\Gamma \left({\delta }_{j}\right) }{\Gamma \left({\alpha }_{j}+\delta _{j}\right) }\left( \psi \left(x_{j}\right) -\psi \left( 0\right) \right) ^{{\alpha }_{j}+{\delta }_{j}-1}.
\end{eqnarray*}
\end{proof}

\begin{theorem}\label{te2} If $v$ is a solution to the inequality {\textnormal{Eq.(\ref{eq4})}}, then $\left( v,v_{1},v_{2}\right) $ satisfy the following system of integral inequalities:
\begin{eqnarray*}
&&\left\vert 
\begin{array}{c}
v\left( x,y\right) -\dfrac{\left( \psi \left( y\right) -\psi \left( 0\right) \right) ^{\gamma -1}}{\Gamma \left( \gamma \right) }v\left( x,0\right) - 
\dfrac{\left( \psi \left( x\right) -\psi \left( 0\right) \right) ^{\gamma -1} }{\Gamma \left( \gamma \right) }v\left( 0,y\right)  \\ 
+v\left( 0,0\right) -I_{\theta }^{\alpha ;\psi }f\left( x,y,v\left( x,y\right) ,v_{1}\left( x,y\right) ,v_{2}\left( x,y\right) \right) 
\end{array}
\right\vert   \notag \\
&\leq &\varepsilon \dfrac{\left( \psi \left( y\right) -\psi \left( 0\right) \right) ^{\alpha _{2}}}{\Gamma \left( \alpha _{2}+1\right) }\dfrac{
\left( \psi \left( x\right) -\psi \left( 0\right) \right) ^{\alpha _{1}}}{\Gamma \left( \alpha _{1}+1\right) },
\end{eqnarray*}
\begin{eqnarray*}
&&\left\vert v_{1}\left( x,y\right) -\dfrac{\left( \psi \left( y\right) -\psi \left( 0\right) \right) ^{\gamma -1}}{\Gamma \left( \gamma \right) } 
v_{1}\left( x,0\right) -I_{0+,y}^{\alpha _{2};\psi }f\left( x,y,v\left(
x,y\right) ,v_{1}\left( x,y\right) ,v_{2}\left( x,y\right) \right) \right\vert   \notag \\
&\leq &\varepsilon \frac{\left( \psi \left( y\right) -\psi \left( 0\right) \right) ^{\alpha _{2}}}{\Gamma \left( \alpha _{2}+1\right) },
\end{eqnarray*}
\begin{eqnarray*}
&&\left\vert v_{2}\left( x,y\right) -\dfrac{\left( \psi \left( x\right) -\psi \left( 0\right) \right) ^{\gamma -1}}{\Gamma \left( \gamma \right) } 
v_{2}\left( 0,y\right) -I_{0+,x}^{\alpha _{1};\psi }f\left( x,y,v\left(
x,y\right) ,v_{1}\left( x,y\right) ,v_{2}\left( x,y\right) \right) \right\vert   \notag \\
&\leq &\varepsilon \frac{\left( \psi \left( x\right) -\psi \left( 0\right) \right) ^{\alpha _{1}}}{\Gamma \left( \alpha _{1}+1\right) },
\end{eqnarray*}
\end{theorem}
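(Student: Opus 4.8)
The plan is to reduce the differential inequality Eq.(\ref{eq4}) to an integral one and then invert the $\psi$-Hilfer partial derivative by the $\psi$-Riemann--Liouville partial integral. First, by Remark \ref{re3}, since $v$ solves Eq.(\ref{eq4}) there is $g\in C\left(\left[0,a\right)\times\left[0,b\right),\mathbb{B}\right)$ with $\left\vert g(x,y)\right\vert\le\varepsilon$ and
\begin{equation*}
{}^{H}\mathbb{D}_{\theta}^{\alpha,\beta;\psi}v(x,y)=f\left(x,y,v(x,y),v_{1}(x,y),v_{2}(x,y)\right)+g(x,y).
\end{equation*}
Each of the three inequalities will follow by applying a suitable fractional integral to this identity, using that the integral inverts the Hilfer derivative up to boundary terms, and then estimating the contribution of $g$ through $\left\vert g\right\vert\le\varepsilon$.

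For the first inequality I would apply the full two-variable integral $I_{\theta}^{\alpha;\psi}$ of Eq.(\ref{eq31}) to both sides. Writing the derivative as in Eq.(\ref{eq35}) and using the semigroup property $I_{\theta}^{\alpha;\psi}I_{\theta}^{\beta(1-\alpha);\psi}=I_{\theta}^{\gamma;\psi}$ with $\gamma=\alpha+\beta(1-\alpha)$, together with $(1-\beta)(1-\alpha)=1-\gamma$, the left-hand side becomes $I_{\theta}^{\gamma;\psi}$ applied to the $\psi$-Riemann--Liouville partial derivative of order $\gamma$; the fundamental theorem of fractional calculus for this operator returns $v(x,y)$ minus the boundary contributions of the iterated integration in the two directions, namely
\begin{equation*}
\frac{(\psi(y)-\psi(0))^{\gamma-1}}{\Gamma(\gamma)}v(x,0),\quad \frac{(\psi(x)-\psi(0))^{\gamma-1}}{\Gamma(\gamma)}v(0,y),\quad v(0,0),
\end{equation*}
the exact combination inside the absolute value. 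On the right-hand side $I_{\theta}^{\alpha;\psi}f$ is the stated term while $\left\vert I_{\theta}^{\alpha;\psi}g\right\vert\le\varepsilon\,I_{\theta}^{\alpha;\psi}[1]$, with $[1]$ the constant function. Evaluating $I_{\theta}^{\alpha;\psi}[1]$ by Lemma \ref{LE2} with $\delta_{1}=\delta_{2}=1$ (so that $(\psi-\psi(0))^{\delta_{j}-1}\equiv1$) gives
\begin{equation*}
I_{\theta}^{\alpha;\psi}[1]=\frac{(\psi(x)-\psi(0))^{\alpha_{1}}}{\Gamma(\alpha_{1}+1)}\frac{(\psi(y)-\psi(0))^{\alpha_{2}}}{\Gamma(\alpha_{2}+1)},
\end{equation*}
which is precisely the claimed bound after rearranging and taking norms.

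The remaining two inequalities use a single partial integration, exploiting that the mixed derivative in Eq.(\ref{eq35}) factors as a composition of two one-variable $\psi$-Hilfer derivatives, one acting in $x$ and one in $y$, since the operators there act variable by variable. Because $v_{1}$ is the one-variable Hilfer derivative of $v$ in $x$, the identity above says that the one-variable Hilfer derivative of $v_{1}$ in $y$ equals $f+g$; applying the one-variable integral $I_{0+,y}^{\alpha_{2};\psi}$ of Eq.(\ref{eq33}) inverts the $y$-derivative and returns $v_{1}(x,y)-\frac{(\psi(y)-\psi(0))^{\gamma-1}}{\Gamma(\gamma)}v_{1}(x,0)$, leaving $I_{0+,y}^{\alpha_{2};\psi}f$ and $I_{0+,y}^{\alpha_{2};\psi}g$ on the right. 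Bounding $\left\vert I_{0+,y}^{\alpha_{2};\psi}g\right\vert\le\varepsilon\,I_{0+,y}^{\alpha_{2};\psi}[1]$ and computing the latter by Lemma \ref{LE1} with $\delta=1$ yields $\varepsilon(\psi(y)-\psi(0))^{\alpha_{2}}/\Gamma(\alpha_{2}+1)$, as required. The third inequality is identical with $x$ and $y$ interchanged, integrating in $x$ through $I_{0+,x}^{\alpha_{1};\psi}$ of Eq.(\ref{eq32}).

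The main obstacle is the inversion step: proving that applying $I_{\theta}^{\alpha;\psi}$ (respectively the partial integrals) to ${}^{H}\mathbb{D}_{\theta}^{\alpha,\beta;\psi}v$ recovers $v$ minus exactly the listed boundary terms. This is a two-dimensional fundamental theorem of fractional calculus for the Hilfer partial derivative; the delicate points are the cancellation provided by the semigroup law in each variable and, for the first inequality, the careful bookkeeping of the three boundary terms $v(x,0)$, $v(0,y)$, $v(0,0)$ produced by integrating in both directions, the last being the cross term of the two one-variable boundary contributions. Once this identity is established, the three estimates are immediate consequences of $\left\vert g\right\vert\le\varepsilon$ together with Lemmas \ref{LE1} and \ref{LE2} specialized to $\delta=1$.
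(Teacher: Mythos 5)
Your proposal is correct and follows essentially the same route as the paper's proof: invoke Remark \ref{re3} to rewrite the inequality as an equation with a perturbation $g$, $\left\vert g\right\vert\le\varepsilon$, pass to the integral representations for $\left(v,v_{1},v_{2}\right)$ with the stated boundary terms, bound the $g$-contributions by $\varepsilon$ times the fractional integral of the constant function, and evaluate that integral via Lemma \ref{LE2} (respectively Lemma \ref{LE1}) with $\delta_{j}=1$. The only differences are cosmetic: the paper simply asserts the integral equations without the semigroup/inversion discussion you flag as the main obstacle (so you are, if anything, more explicit about the step it glosses over), and you use the standard relation $\gamma=\alpha+\beta(1-\alpha)$ where the paper writes $\gamma=\alpha+\beta(\alpha-1)$.
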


\begin{proof} From Remark \ref{re3} we have that
\begin{eqnarray*}
v\left( x,y\right)  &=&\frac{\left( \psi \left( y\right) -\psi \left( 0\right) \right) ^{\gamma -1}}{\Gamma \left( \gamma \right) }v\left( x,0\right) 
+\frac{\left( \psi \left( x\right) -\psi \left( 0\right) \right)
^{\gamma -1}}{\Gamma \left( \gamma \right) }v\left( 0,y\right)   \notag \\ &&-v\left( 0,0\right) +I_{\theta }^{\alpha ;\psi }f\left( x,y,v\left( x,y\right) 
,v_{1}\left( x,y\right) ,v_{2}\left( x,y\right) \right)
+I_{\theta }^{\alpha ;\psi }g\left( x,y\right) ;
\end{eqnarray*}

\begin{equation*}
v_{1}\left( x,y\right) =\frac{\left( \psi \left( y\right) -\psi \left( 0\right) \right) ^{\gamma -1}}{\Gamma \left( \gamma \right) }v\left( x,0
\right) +I_{0+,y}^{\alpha _{2};\psi }f\left( x,y,v\left( x,y\right)
,v_{1}\left( x,y\right) ,v_{2}\left( x,y\right) \right) +I_{0+,y}^{\alpha _{2};\psi }g\left( x,y\right) ;
\end{equation*}

\begin{equation*}
v_{2}\left( x,y\right) =\frac{\left( \psi \left( x\right) -\psi \left( 0\right) \right) ^{\gamma -1}}{\Gamma \left( \gamma \right) }v\left( 0,y\right) 
+I_{0+,x}^{\alpha _{1};\psi }f\left( x,y,v\left( x,y\right)
,v_{1}\left( x,y\right) ,v_{2}\left( x,y\right) \right) +I_{0+,x}^{\alpha _{1};\psi }g\left( x,y\right) . 
\end{equation*}

The following also hold: 
\begin{eqnarray}\label{eq20}
&&\left\vert 
\begin{array}{c}
v\left( x,y\right) -\dfrac{\left( \psi \left( y\right) -\psi \left( 0\right) \right) ^{\gamma -1}}{\Gamma \left( \gamma \right) }v\left( x,0\right) - 
\dfrac{\left( \psi \left( x\right) -\psi \left( 0\right) \right) ^{\gamma -1} }{\Gamma \left( \gamma \right) }v\left( 0,y\right)  \\ 
+v\left( 0,0\right) -I_{\theta }^{\alpha ;\psi }f\left( x,y,v\left( x,y\right) ,v_{1}\left( x,y\right) ,v_{2}\left( x,y\right) \right) 
\end{array}
\right\vert   \notag \\
&\leq &I_{\theta }^{\alpha ;\psi }\left\vert g\left( x,y\right) \right\vert \leq \varepsilon I_{\theta }^{\alpha ;\psi }\left( 1\right) .
\end{eqnarray}

Note that, by Lemma \ref{LE2}, we have
\begin{eqnarray}\label{A}
I_{\theta }^{\alpha ;\psi }1 &=&\frac{1}{\Gamma \left( \alpha _{1}\right) \Gamma \left( \alpha _{2}\right) }\int_{0}^{x}\psi ^{\prime }\left( s\right)\int_{0}^{y}
\psi ^{\prime }\left( t\right) \left( \psi \left( x\right) -\psi
\left( s\right) \right) ^{\alpha _{1}-1}\left( \psi \left( y\right) -\psi\left( t\right) \right) ^{\alpha _{2}-1}dtds  \notag  \\&=&\frac{\left( \psi \left( y\right) 
-\psi \left( 0\right) \right) ^{\alpha_{2}}\left( \psi \left( x\right) -\psi \left( 0\right) \right) ^{\alpha _{1}}}{\Gamma \left( \alpha _{1}+1\right) 
\Gamma \left( \alpha _{2}+1\right) }.
\end{eqnarray}

Substituting \rm{Eq.(\ref{A})} in \rm{Eq.(\ref{eq20})}, we get
\begin{eqnarray*}
&&\left\vert 
\begin{array}{c}
v\left( x,y\right) -\dfrac{\left( \psi \left( y\right) -\psi \left( 0\right) \right) ^{\gamma -1}}{\Gamma \left( \gamma \right) }v\left( x,0\right) - 
\dfrac{\left( \psi \left( x\right) -\psi \left( 0\right) \right) ^{\gamma -1} }{\Gamma \left( \gamma \right) }v\left( 0,y\right)  \\ 
+v\left( 0,0\right) -I_{\theta }^{\alpha ;\psi }f\left( x,y,v\left( x,y\right) ,v_{1}\left( x,y\right) ,v_{2}\left( x,y\right) \right) 
\end{array}
\right\vert   \notag \\
&\leq &\varepsilon \frac{\left( \psi \left( y\right) -\psi \left( 0\right) \right) ^{\alpha _{2}}\left( \psi \left( x\right) -\psi \left( 0\right) 
\right) ^{\alpha _{1}}}{\Gamma \left( \alpha _{1}+1\right) \Gamma \left(
\alpha _{2}+1\right) }.
\end{eqnarray*}

On the other hand, we also have
\begin{eqnarray*}
&&\left\vert v_{1}\left( x,y\right) -\frac{\left( \psi \left( y\right) -\psi \left( 0\right) \right) ^{\gamma -1}}{\Gamma \left( \gamma \right) }
v_{1}\left( x,0\right) -I_{0+,y}^{\alpha _{2};\psi }f\left( x,y,v\left(
x,y\right) ,v_{1}\left( x,y\right) ,v_{2}\left( x,y\right) \right) \right\vert   \notag \\
&\leq &\varepsilon \frac{\left( \psi \left( y\right) -\psi \left( 0\right) \right) ^{\alpha _{2}}}{\Gamma \left( \alpha _{2}+1\right) }
\end{eqnarray*}
and
\begin{eqnarray*}
&&\left\vert v_{2}\left( x,y\right) -\frac{\left( \psi \left( x\right) -\psi \left( 0\right) \right) ^{\gamma -1}}{\Gamma \left( \gamma \right) } 
v_{2}\left( 0,y\right) -I_{0+,x}^{\alpha _{1};\psi }f\left( x,y,v\left(
x,y\right) ,v_{1}\left( x,y\right) ,v_{2}\left( x,y\right) \right) \right\vert   \notag \\
&\leq &\varepsilon \frac{\left( \psi \left( x\right) -\psi \left( 0\right) \right) ^{\alpha _{1}}}{\Gamma \left( \alpha _{1}+1\right) }.
\end{eqnarray*}

\end{proof}

The proofs of the following theorems will be optimized here, since they follow the same reasoning of the previous one.

\begin{theorem}\label{te3} If $v$ is a solution to the inequality \textnormal{Eq.(\ref{eq5})}, then $\left( v,v_{1},v_{2}\right) $ satisfy the following integral inequalities:
\begin{eqnarray*}
&&\left\vert 
\begin{array}{c}
v\left( x,y\right) -\dfrac{\left( \psi \left( y\right) -\psi \left( 0\right) \right) ^{\gamma -1}}{\Gamma \left( \gamma \right) }v\left( x,0\right) - \dfrac{\left( \psi \left( x\right) -\psi \left( 0\right) \right) ^{\gamma -1} }{\Gamma \left( \gamma \right) }v\left( 0,y\right)  \\ 
+v\left( 0,0\right) -I_{\theta }^{\alpha ;\psi }f\left( x,y,v\left( x,y\right) ,v_{1}\left( x,y\right) ,v_{2}\left( x,y\right) \right) 
\end{array}
\right\vert   \notag \\ &\leq &I_{\theta }^{\alpha ;\psi }\varphi \left( x,y\right) ,
\end{eqnarray*}

\begin{eqnarray*}
&&\left\vert v_{1}\left( x,y\right) -\frac{\left( \psi \left( y\right) -\psi \left( 0\right) \right) ^{\gamma -1}}{\Gamma \left( \gamma \right) } 
v_{1}\left( x,0\right) -I_{0+,y}^{\alpha _{2};\psi }f\left( x,y,v\left(
x,y\right) ,v_{1}\left( x,y\right) ,v_{2}\left( x,y\right) \right) \right\vert   \notag \\
&\leq &I_{0+,y}^{\alpha _{2};\psi }\varphi \left( x,y\right) ,
\end{eqnarray*}

\begin{eqnarray*}
&&\left\vert v_{2}\left( x,y\right) -\frac{\left( \psi \left( x\right) -\psi \left( 0\right) \right) ^{\gamma -1}}{\Gamma \left( \gamma \right) } 
v_{2}\left( 0,y\right) -I_{0+,x}^{\alpha _{1};\psi }f\left( x,y,v\left(
x,y\right) ,v_{1}\left( x,y\right) ,v_{2}\left( x,y\right) \right) \right\vert   \notag \\
&\leq &I_{0+,x}^{\alpha _{1};\psi }\varphi \left( x,y\right) ,
\end{eqnarray*}
with $\dfrac{1}{2}<\alpha \leq 1,$ $0\leq \gamma <1$ $\left( \gamma =\alpha +\beta \left( \alpha -1\right) \right)$, for all $x\in \left[ 0,a\right) $ 
and $y\in \left[ 0,b\right)$.
\end{theorem}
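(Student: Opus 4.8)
The plan is to mirror, line for line, the argument used for Theorem \ref{te2}, replacing the uniform bound $\varepsilon$ by the function $\varphi(x,y)$ at the one place where it enters. Since $v$ is now assumed to solve the inequality Eq.(\ref{eq5}), the characterization of solutions to Eq.(\ref{eq5}) (the analogue of Remark \ref{re3}, stated immediately after it) furnishes a perturbation $g\in C\left(\left[0,a\right)\times\left[0,b\right),\mathbb{B}\right)$ with $\left\vert g(x,y)\right\vert\leq\varphi(x,y)$ for all $x\in\left[0,a\right)$, $y\in\left[0,b\right)$, together with the three integral representations of $v$, $v_1$, $v_2$ in terms of $f$ and of the fractional partial integrals of $g$. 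These representations are identical in form to those written out at the start of the proof of Theorem \ref{te2}.

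First I would record each of the three representations and move to the left-hand side every term except the one involving $g$, so that what remains on the right is exactly a fractional partial integral of $g$: namely $I_{\theta}^{\alpha;\psi}g(x,y)$ in the first line, $I_{0+,y}^{\alpha_2;\psi}g(x,y)$ in the second, and $I_{0+,x}^{\alpha_1;\psi}g(x,y)$ in the third. Taking norms in the Banach space $(\mathbb{B},\left\vert\cdot\right\vert)$ and passing the modulus inside the integrals via the triangle inequality gives the estimate by $I_{\theta}^{\alpha;\psi}\left\vert g(x,y)\right\vert$, and analogously for the two one-variable integrals.

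The operative step is monotonicity of the fractional partial integral. Because $\psi$ is increasing (so $\psi'\geq 0$) and the kernels $\left(\psi(x_j)-\psi(s_j)\right)^{\alpha_j-1}$ are nonnegative over the region of integration, the operators $I_{\theta}^{\alpha;\psi}$, $I_{0+,y}^{\alpha_2;\psi}$ and $I_{0+,x}^{\alpha_1;\psi}$ are order-preserving on nonnegative functions. Combined with the pointwise bound $\left\vert g\right\vert\leq\varphi$ this yields $I_{\theta}^{\alpha;\psi}\left\vert g\right\vert\leq I_{\theta}^{\alpha;\psi}\varphi$, and likewise for the other two, which is precisely the right-hand side asserted in the theorem.

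The sole genuine departure from Theorem \ref{te2} lies in the final simplification. There the bound was $\varepsilon\,I_{\theta}^{\alpha;\psi}(1)$, which the computation in Eq.(\ref{A}) (an instance of Lemma \ref{LE2}) evaluates in closed form as a product of powers of $\psi(x)-\psi(0)$ and $\psi(y)-\psi(0)$. Here $\varphi$ is an arbitrary continuous function, so no closed form is available and the estimates must be left expressed as the fractional partial integrals $I_{\theta}^{\alpha;\psi}\varphi$, $I_{0+,y}^{\alpha_2;\psi}\varphi$ and $I_{0+,x}^{\alpha_1;\psi}\varphi$. I therefore anticipate no real obstacle; the only point requiring care is the monotonicity of the fractional partial integral, which depends solely on the sign conditions on $\psi'$ and on the kernel that are already part of the standing hypotheses.
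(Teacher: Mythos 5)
Your proposal is correct and follows exactly the route the paper intends: the paper omits the proof of Theorem \ref{te3}, stating only that it ``follows the same reasoning'' as Theorem \ref{te2}, and your argument is precisely that reasoning with $\varepsilon$ replaced by $\varphi$ via the remark characterizing solutions of Eq.(\ref{eq5}), the bound now left as $I_{\theta }^{\alpha ;\psi }\varphi$ (and its one-variable analogues) rather than evaluated in closed form through Lemma \ref{LE2}. You also correctly identify monotonicity of the fractional partial integral on nonnegative functions as the one step needing justification, which the paper leaves implicit.
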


\begin{theorem}\label{te4} If $v$ is a solution to the inequality \textnormal{Eq.(\ref{eq6})}, then $\left( v,v_{1},v_{2}\right) $ satisfy the following integral inequalities:
\begin{eqnarray*}
&&\left\vert 
\begin{array}{c}
v\left( x,y\right) -\dfrac{\left( \psi \left( y\right) -\psi \left( 0\right) \right) ^{\gamma -1}}{\Gamma \left( \gamma \right) }v\left( x,0\right) - \dfrac{\left( \psi \left( x\right) -\psi \left( 0\right) \right) ^{\gamma -1}}{\Gamma \left( \gamma \right) }v\left( 0,y\right)  \\ 
+v\left( 0,0\right) -I_{\theta }^{\alpha ;\psi }f\left( x,y,v\left( x,y\right) ,v_{1}\left( x,y\right) ,v_{2}\left( x,y\right) \right) 
\end{array}
\right\vert   \notag \\ &\leq &\varepsilon I_{\theta }^{\alpha ;\psi }\varphi \left( x,y\right),
\end{eqnarray*}

\begin{eqnarray*}
&&\left\vert v_{1}\left( x,y\right) -\frac{\left( \psi \left( y\right) -\psi \left( 0\right) \right) ^{\gamma -1}}{\Gamma \left( \gamma \right) } v_{1}\left( x,0\right) -I_{0+,y}^{\alpha _{2};\psi }f\left( x,y,v\left(
x,y\right) ,v_{1}\left( x,y\right) ,v_{2}\left( x,y\right) \right) \right\vert   \notag \\
&\leq &\varepsilon I_{0+,y}^{\alpha _{2};\psi }\varphi \left( x,y\right),
\end{eqnarray*}

\begin{eqnarray*}
&&\left\vert v_{2}\left( x,y\right) -\frac{\left( \psi \left( x\right) -\psi \left( 0\right) \right) ^{\gamma -1}}{\Gamma \left( \gamma \right) } v_{2}\left( 0,y\right) -I_{0+,x}^{\alpha _{1};\psi }f\left( x,y,v\left(
x,y\right) ,v_{1}\left( x,y\right), v_{2}\left( x,y\right) \right) \right\vert   \notag \\
&\leq &\varepsilon I_{0+,x}^{\alpha _{1};\psi }\varphi \left( x,y\right), 
\end{eqnarray*}
with $\dfrac{1}{2}<\alpha \leq 1,$ $0\leq \gamma <1$ $\left( \gamma =\alpha +\beta \left( \alpha -1\right) \right) ,$ for all $x\in \left[ 0,a\right) $ and $y\in \left[ 0,b\right)$.
\end{theorem}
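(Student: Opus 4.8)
The plan is to follow the same three–step scheme used in the proof of Theorem \ref{te2}, with the bounding function $\varepsilon\varphi$ now playing the role that the constant $\varepsilon$ played there. First I would invoke the Remark characterizing solutions of the inequality Eq.(\ref{eq6}): since $v$ is such a solution, there exists $g\in C\left([0,a)\times[0,b),\mathbb{B}\right)$, depending on $v$, with $\left\vert g(x,y)\right\vert\leq\varepsilon\varphi(x,y)$ for all $x\in[0,a)$, $y\in[0,b)$, and such that
\[
\frac{\partial_{\beta;\psi}^{2\alpha}v}{\partial_{\beta;\psi}^{\alpha}x^{\alpha}\partial_{\beta;\psi}^{\alpha}y^{\alpha}}(x,y)=f\bigl(x,y,v(x,y),v_{1}(x,y),v_{2}(x,y)\bigr)+g(x,y).
\]

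Next, exactly as in Theorem \ref{te2}, I would apply the fractional partial integral $I_{\theta}^{\alpha;\psi}$ to this identity, and the one–variable integrals $I_{0+,y}^{\alpha_{2};\psi}$ and $I_{0+,x}^{\alpha_{1};\psi}$ to the corresponding relations for $v_{1}$ and $v_{2}$. Using the inversion that recovers $v$ from ${}^{H}\mathbb{D}_{\theta}^{\alpha,\beta;\psi}v$ up to the boundary contributions carried by the terms with $(\psi(y)-\psi(0))^{\gamma-1}/\Gamma(\gamma)$, $(\psi(x)-\psi(0))^{\gamma-1}/\Gamma(\gamma)$ and $v(0,0)$, where $\gamma=\alpha+\beta(\alpha-1)$, this yields
\[
v(x,y)=\frac{(\psi(y)-\psi(0))^{\gamma-1}}{\Gamma(\gamma)}v(x,0)+\frac{(\psi(x)-\psi(0))^{\gamma-1}}{\Gamma(\gamma)}v(0,y)-v(0,0)+I_{\theta}^{\alpha;\psi}f+I_{\theta}^{\alpha;\psi}g,
\]
together with the two analogous representations for $v_{1}$ and $v_{2}$, each carrying an extra term $I_{0+,y}^{\alpha_{2};\psi}g$ or $I_{0+,x}^{\alpha_{1};\psi}g$ respectively.

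Then I would transfer the boundary terms and the $f$–integral to the left and take the Banach-space norm $\left\vert\cdot\right\vert$. The only remaining term is $I_{\theta}^{\alpha;\psi}g$, and since the kernel of the fractional partial integral is nonnegative (the factors $\psi^{\prime}(s_{i})>0$ and $(\psi(x_{i})-\psi(s_{i}))^{\alpha_{i}-1}\geq 0$), the operator is monotone, whence
\[
\left\vert I_{\theta}^{\alpha;\psi}g(x,y)\right\vert\leq I_{\theta}^{\alpha;\psi}\left\vert g(x,y)\right\vert\leq\varepsilon\,I_{\theta}^{\alpha;\psi}\varphi(x,y),
\]
and likewise $\left\vert I_{0+,y}^{\alpha_{2};\psi}g\right\vert\leq\varepsilon I_{0+,y}^{\alpha_{2};\psi}\varphi$ and $\left\vert I_{0+,x}^{\alpha_{1};\psi}g\right\vert\leq\varepsilon I_{0+,x}^{\alpha_{1};\psi}\varphi$. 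These are precisely the three asserted inequalities, and the proof concludes.

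The main obstacle, just as in Theorem \ref{te2}, is the inversion step: justifying rigorously that applying $I_{\theta}^{\alpha;\psi}$ to the mixed $\psi$-Hilfer fractional partial derivative returns $v$ together with exactly the stated boundary data, and correspondingly for the first-order mixed derivatives $v_{1}$ and $v_{2}$, with the boundary exponents expressed through $\gamma=\alpha+\beta(\alpha-1)$. Once that composition rule is in hand, the remainder is only the monotonicity bound above; the passage from $\varepsilon$ (Theorem \ref{te2}) or $\varphi$ (Theorem \ref{te3}) to $\varepsilon\varphi$ leaves the argument unchanged except on the right-hand side.
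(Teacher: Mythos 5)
Your proposal is correct and takes essentially the same route as the paper: the paper omits the proof of Theorem \ref{te4}, stating only that it ``follows the same reasoning'' as Theorem \ref{te2}, and your argument --- invoking the remark that furnishes $g$ with $\left\vert g\left( x,y\right) \right\vert \leq \varepsilon \varphi \left( x,y\right)$, writing the same integral representations for $\left( v,v_{1},v_{2}\right)$, and bounding $\left\vert I_{\theta }^{\alpha ;\psi }g\right\vert \leq \varepsilon I_{\theta }^{\alpha ;\psi }\varphi$ by positivity of the kernel --- is exactly that adaptation, indeed slightly simpler since no evaluation of $I_{\theta }^{\alpha ;\psi }\left( 1\right)$ via Lemma \ref{LE2} is needed here because the right-hand side is left as $\varepsilon I_{\theta }^{\alpha ;\psi }\varphi \left( x,y\right)$.
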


\section{Ulam-Hyers stability} 

In this section we present a result on the existence and uniqueness of the solution to \rm {Eq.(\ref{eq3})} and we deduce a result on Ulam-Hyers stability for the same equation in the case \ $a<\infty $ and $b<\infty$.

\begin{theorem}\label{ver1}We assume that

\textnormal{(i)} $a<\infty $ and $b<\infty$;

\textnormal{(ii)} $f\in C\left( \left[ 0,a\right] \times \left[ 0,b\right] \times \mathbb{B}^{3}, \mathbb{B}\right)$;

\textnormal{(iii)} There exists $L_{f}>0$ such that
\begin{equation*}
\left\vert f\left( x,y,z_{1},z_{2},z_{3}\right) -f\left( x,y,t_{1},t_{2},t_{3}\right) \right\vert \leq L_{f}\underset{i\in \left\{ 1,2,3\right\} }{\max }
\left\vert z_{i}-t_{i}\right\vert 
\end{equation*}
for all $x\in \left[ 0,a\right] ,$ $y\in \left[ 0,b\right] $ and $z_{1},z_{2},z_{3},t_{1},t_{2},t_{3}\in \mathbb{B}$.

Then
\begin{enumerate}

\item  For $\phi \in C^{1}\left( \left[ 0,a\right], \mathbb{B}\right) $ and $\xi \in C^{1}\left( \left[ 0,b\right], \mathbb{B}\right)$, \textnormal{Eq.(\ref{eq3})} has a unique solution satisfying \begin{equation}\label{eq37}
\left\{ 
\begin{array}{ccc}
I_{\theta }^{1-\gamma ;\psi }u\left( x,0\right)  &=&\phi \left( x\right),\text{ }\forall x\in \left[ 0,a\right] \\
I_{\theta }^{1-\gamma ;\psi }u\left( 0,y\right)  &=&\xi \left( y\right),\text{ }\forall y\in \left[ 0,b\right]
\end{array}
\right.
\end{equation}
with $\gamma =\alpha +\beta \left( \alpha -1\right) $ $\left( 0\leq \gamma <1\right)$.

\item The solution of \textnormal{Eq.(\ref{eq3})} is Ulam-Hyers stable.
\end{enumerate}
\end{theorem}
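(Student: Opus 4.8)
The plan is to establish the two assertions in turn, reducing part (1) to the Banach fixed-point theorem and part (2) to the Gronwall Lemma \ref{l1}. Throughout I shall use that $a,b<\infty$ makes the domain compact, and that $\alpha_{1}=\alpha_{2}=\alpha$ and $\gamma=\alpha+\beta(\alpha-1)$.

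For part (1), I would first rewrite Eq.(\ref{eq3}) together with the data Eq.(\ref{eq37}) as an equivalent system of coupled integral equations for the triple $(u,u_{1},u_{2})$, obtained exactly as the three equalities at the beginning of the proof of Theorem \ref{te2} but with $g\equiv 0$ and with the boundary contributions prescribed by $\phi$ and $\xi$ through the conditions $I_{\theta}^{1-\gamma;\psi}u(x,0)=\phi(x)$ and $I_{\theta}^{1-\gamma;\psi}u(0,y)=\xi(y)$. Schematically,
\[
u(x,y)=\Lambda(x,y)+I_{\theta}^{\alpha;\psi}f\bigl(x,y,u,u_{1},u_{2}\bigr),
\]
with $u_{1}$ and $u_{2}$ expressed analogously via the one-variable integrals $I_{0+,y}^{\alpha_{2};\psi}f$ and $I_{0+,x}^{\alpha_{1};\psi}f$, and $\Lambda$ collecting the known boundary terms. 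I would then define an operator $T$ on $X=C\bigl([0,a]\times[0,b],\mathbb{B}^{3}\bigr)$ by letting $T(u,u_{1},u_{2})$ be the right-hand sides of this system, so that fixed points of $T$ are precisely the solutions of Eq.(\ref{eq3}) satisfying Eq.(\ref{eq37}).

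The contraction estimate is the crux of part (1). Applying the Lipschitz hypothesis (iii), each component of $T(w)-T(\widetilde{w})$ is dominated by $L_{f}$ times a fractional integral of $\max_{i}|w_{i}-\widetilde{w}_{i}|$; evaluating these with Lemma \ref{LE2} gives a Lipschitz constant for $T$ proportional to $L_{f}(\psi(a)-\psi(0))^{\alpha}(\psi(b)-\psi(0))^{\alpha}/[\Gamma(\alpha+1)]^{2}$, which need not be below one. To force a genuine contraction I would equip $X$ with a Bielecki-type weighted norm $\|w\|=\sup_{(x,y)}e^{-\lambda(\psi(x)+\psi(y))}\max_{i}|w_{i}(x,y)|$ and choose $\lambda>0$ large enough; alternatively, one iterates $T$ and checks, again via Lemma \ref{LE2}, that some power $T^{n}$ has constant strictly below one, since the iterated kernels acquire the factorially small factors $[\Gamma(n\alpha+1)]^{-1}$. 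The Banach fixed-point theorem then delivers the unique solution.

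For part (2), let $v$ be any solution of inequality Eq.(\ref{eq4}) and $u$ the unique solution from part (1) with the same data. By Theorem \ref{te2} the triple $(v,v_{1},v_{2})$ satisfies the three stated integral inequalities, while $(u,u_{1},u_{2})$ satisfies the corresponding equalities. Subtracting, using the triangle inequality together with (iii), and setting $w(x,y)=\max\{|v-u|,|v_{1}-u_{1}|,|v_{2}-u_{2}|\}$, I obtain a scalar inequality of the form $w(x,y)\le \varepsilon\,\kappa(x,y)+L_{f}\,I_{\theta}^{\alpha;\psi}w(x,y)$, where $\kappa$ gathers the explicit $\varepsilon$-factors from Theorem \ref{te2} and is bounded on the compact domain. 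Invoking the Gronwall Lemma \ref{l1} with $h\equiv L_{f}$ (nonnegative and nondecreasing, $\psi$ increasing with $\psi'$ nondecreasing, as required) yields $w(x,y)\le \varepsilon\,\kappa(x,y)\,\mathbb{E}_{\alpha}\bigl[L_{f}\Gamma(\alpha)(\psi(x)-\psi(0))^{\alpha}(\psi(y)-\psi(0))^{\alpha}\bigr]$; since $a,b<\infty$ the Mittag-Leffler factor is bounded, producing constants $C_{f}^{1},C_{f}^{2},C_{f}^{3}>0$ and hence the Ulam-Hyers stability. The main obstacle throughout is the bookkeeping needed to collapse the coupled three-component system into a single scalar inequality while keeping the kernel in exactly the form demanded by Lemma \ref{l1}.
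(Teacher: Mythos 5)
Your part (1) is correct and is essentially the paper's own route: the paper likewise rewrites Eq.(\ref{eq3})--(\ref{eq37}) as the tripled integral system (its Eq.(\ref{eq38})), endows $X=C\left(\left[0,a\right]\times\left[0,b\right]\right)^{3}$ with a Bielecki norm, here with weight $\exp(-\tau(x+y))$, and invokes the contraction principle once $L_{f}/\tau<1$. Your weight $\exp(-\lambda(\psi(x)+\psi(y)))$ is in fact better adapted to the kernel, since $\frac{1}{\Gamma(\alpha_{1})}\int_{0}^{x}\psi^{\prime}(s)\left(\psi(x)-\psi(s)\right)^{\alpha_{1}-1}e^{\lambda\psi(s)}ds\leq\lambda^{-\alpha_{1}}e^{\lambda\psi(x)}$ yields an explicit contraction constant $L_{f}\lambda^{-\alpha_{1}-\alpha_{2}}$ (and similarly for the one-variable operators), whereas the paper's claimed bound $L_{f}/\tau$ for the unweighted exponential needs extra work for fractional kernels. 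So far, same approach, slightly cleaner execution.

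In part (2), however, your reduction to a single scalar inequality fails as stated, and this is a genuine gap. Setting $w=\max\left\{\left\vert v-u\right\vert,\left\vert v_{1}-u_{1}\right\vert,\left\vert v_{2}-u_{2}\right\vert\right\}$, Theorem \ref{te2} together with hypothesis (iii) gives three inequalities governed by three \emph{different} operators: $\left\vert v-u\right\vert\leq\varepsilon\kappa_{1}+L_{f}I_{\theta}^{\alpha;\psi}w$, but $\left\vert v_{1}-u_{1}\right\vert\leq\varepsilon\kappa_{2}+L_{f}I_{0+,y}^{\alpha_{2};\psi}w$ and $\left\vert v_{2}-u_{2}\right\vert\leq\varepsilon\kappa_{3}+L_{f}I_{0+,x}^{\alpha_{1};\psi}w$. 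Taking maxima therefore yields $w\leq\varepsilon\kappa+L_{f}\max\left\{I_{\theta}^{\alpha;\psi}w,\;I_{0+,y}^{\alpha_{2};\psi}w,\;I_{0+,x}^{\alpha_{1};\psi}w\right\}$, not your claimed $w\leq\varepsilon\kappa+L_{f}I_{\theta}^{\alpha;\psi}w$: the one-variable integrals are not dominated by the double one (as $x\rightarrow0^{+}$ one has $I_{\theta}^{\alpha;\psi}w(x,y)\rightarrow0$ while $I_{0+,y}^{\alpha_{2};\psi}w(x,y)$ need not be small), and Lemma \ref{l1} admits only one kernel. The paper does not collapse the system: it applies the Gronwall lemma three separate times, once per inequality with the matching kernel, which is why it obtains three structurally different constants --- $C_{f}^{1}$ carries $\mathbb{E}_{\alpha}\left[L_{f}\Gamma(\alpha_{1})\Gamma(\alpha_{2})\left(\psi(a)-\psi(0)\right)^{\alpha_{1}}\left(\psi(b)-\psi(0)\right)^{\alpha_{2}}\right]$, while $C_{f}^{2}$ and $C_{f}^{3}$ carry only the one-variable Mittag-Leffler factors. (To be fair, because the right-hand sides involve the coupled quantity $\max_{i}\left\vert v_{i}-u_{i}\right\vert$ rather than the single difference on the left, even the paper's componentwise use of Lemma \ref{l1} is not a literal application and would strictly require a vector-valued Gronwall argument; but your proposed repair, which replaces the two one-dimensional kernels by the two-dimensional one, asserts a false domination and so does not close this gap.)
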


\begin{proof}

\textnormal{1.} Let us consider problem \textnormal{Eq.(\ref{eq3})} with conditions \rm {Eq.(\ref{eq37})} as a fixed point problem. If $u$ is a solution of the problem \rm {Eq.(\ref{eq3})} and \rm {Eq.(\ref{eq37})}, then $\left( u, \dfrac{\partial _{\beta ;\psi }^{\alpha }u}{\partial _{\beta ;\psi}x^{\alpha }}, \dfrac{\partial _{\beta ;\psi }^{\alpha }u}{\partial_{\beta ;\psi }y^{\alpha }}\right) $ is a solution of the following system: 
\begin{equation}\label{eq38}
\left\{ 
\begin{array}{ccc}
u\left( x,y\right)  & = & 
\begin{array}{c}
\dfrac{\left( \psi \left( y\right) -\psi \left( 0\right) \right) ^{\gamma -1}}{\Gamma \left( \gamma \right) }\varphi \left( x\right) +\dfrac{\left( \psi\left( x\right) -\psi \left( 0\right) \right) ^{\gamma -1}}{\Gamma \left(\gamma \right) }\xi \left( y\right) -\varphi \left( 0\right)  \\ 
+I_{\theta }^{\alpha ;\psi }f\left( x,y,u\left( x,y\right) ,u_{1}\left(x,y\right) ,u_{2}\left( x,y\right) \right) 
\end{array}
\\ 
u_{1}\left( x,y\right)  & = & \dfrac{\left( \psi \left( y\right) -\psi \left(0\right) \right) ^{\gamma -1}}{\Gamma \left( \gamma \right) }\varphi \left(x\right) +I_{0+,y}^{\alpha _{2};\psi }f\left( x,y,u\left( x,y\right)
,u_{1}\left( x,y\right) ,u_{2}\left( x,y\right) \right)  \\ 
u_{2}\left( x,y\right)  & = & \dfrac{\left( \psi \left( x\right) -\psi \left(0\right) \right) ^{\gamma -1}}{\Gamma \left( \gamma \right) }\xi \left(y\right) +I_{0+,x}^{\alpha _{1};\psi }f\left( x,y,u\left( x,y\right)
,u_{1}\left( x,y\right) ,u_{2}\left( x,y\right) \right) 
\end{array}
\right. 
\end{equation}
with $\dfrac{1}{2}<\alpha _{1},\alpha _{2}\leq 1$ and $0\leq \gamma <1$. We can write this in a general form:

\begin{equation*}
\left\{ 
\begin{array}{ccc}
u\left( x,y\right) &=&A_{1}\left( u,u_{1},u_{2}\right) \left( x,y\right)\\
u_{1}\left( x,y\right) &=&A_{2}\left( u,u_{1},u_{2}\right) \left( x,y\right)\\ 
u_{2}\left( x,y\right) &=&A_{3}\left( u,u_{1},u_{2}\right) \left( x,y\right)
\end{array}
\right.
\end{equation*}
$u,u_{1},u_{2}\in C\left( \left[ 0,a\right] \times \left( 0,b\right) \right)$.

If $\left( u,u_{1},u_{2}\right) \in C\left( \left[ 0,a\right] \times \left[ 0,b\right] \right) ^{3}$ is a solution of \rm {Eq.(\ref{eq38})}, then $u\in C^{1}\left( \left[ 0,a\right] \times \left[ 0,b\right] \right) $ and $u_{1}\left( x,y\right) = \dfrac{ \partial _{\beta ;\psi }^{\alpha }u}{\partial _{\beta ;\psi }x^{\alpha }}\left( x,y\right) $, $u_{2}\left( x,y\right) =\dfrac{\partial _{ \beta ;\psi }^{\alpha }u}{\partial _{\beta ;\psi }y^{\alpha }}\left( x,y\right) $ i.e., $v$ is solution of \rm {Eq.(\ref{eq3})} and \rm {Eq.(\ref{eq37})}.

Let $X:=C\left( \left[ 0,a\right] \times \left[ 0,b\right] \right) \times C\left( \left[ 0,a\right] \times \left[ 0,b\right] \right) \times C
\left( \left[ 0,a\right] \times \left[ 0,b\right] \right) $ and 
\begin{equation*}
\left\Vert \left( u,u_{1},u_{2}\right) \right\Vert :=\max \left\{ 
\begin{array}{c}
\underset{\left( x,y\right) \in \left[ 0,a\right] \times \left[ 0,b\right] }{\max }\left\vert u\left( x,y\right) \right\vert \exp (-\tau \left(x+y\right) ), \\ 
\underset{\left( x,y\right) \in \left[ 0,a\right] \times \left[ 0,b\right] }{\max }\left\vert u_{1}\left( x,y\right) \right\vert \exp (-\tau \left(x+y\right) ), \\ 
\underset{\left( x,y\right) \in \left[ 0,a\right] \times \left[ 0,b\right] }{\max }\left\vert u_{2}\left( x,y\right) \right\vert \exp (-\tau \left(x+y\right) )
\end{array}
\right\};
\end{equation*}
$\left( C\left( \left[ 0,a\right] \times \left[ 0,b\right] \textbf{}\right);\left\Vert \cdot \right\Vert _{B}\right) $ is a Banach space.

Let $A:X\rightarrow X,$ $\left( u,u_{1},u_{2}\right) \rightarrow \left( A_{1}\left( u,u_{1},u_{2}\right) ,A_{2}\left( u,u_{1},u_{2}\right),A_{3}
\left( u,u_{1},u_{2}\right) \right) $; we then have
\begin{equation*}
\left\Vert A_{1}\left( \overline{u},\overline{u_{1}},\overline{u_{2}}\right)-A_{1}\left(\overline{\overline{u}},\overline{\overline{u_{1}}},\overline{ \overline{u_{2}}}\right) \right\Vert _{B}\leq \frac{L_{f}}{\tau }\left\Vert \left( \overline{u},\overline{u_{1}},\overline{u_{2}}\right) -\left( 
\overline{\overline{u}},\overline{\overline{u_{1}}},\overline{\overline{u_{2}}}\right) \right\Vert _{B}.
\end{equation*}

Thus, if $\tau >0$ is such that $\dfrac{L_{f}}{\tau }<1$, then operator $A$ is a contraction and $A$ is a Picard operator. Therefore, \rm {Eq.(\ref{eq3})} and \rm {Eq.(\ref{eq37})} have a unique solution.

2. Let $v$ be a solution to the inequality \rm {Eq.(\ref{eq4})} and let $u$ be the unique solution to \rm {Eq.(\ref{eq3})}, which satisfies the following conditions: 
\begin{equation}\label{eq45}
\left\{ 
\begin{array}{ccc}
I_{\theta }^{1-\gamma ;\psi }u\left( x,0\right)  &=&v\left( x,0\right),\text{ }\forall x\in \left[ 0,a\right]  \\  I_{\theta }^{1-\gamma ;\psi }u\left( 0,y\right)  
&=&u\left( 0,y\right),\text{ }\forall y\in \left[ 0,b\right] \end{array}
\right. .
\end{equation}

From Theorem \ref{te2}, hypothesis (iii) and from Gronwall Lemma \ref{l1}, it follows that
\begin{eqnarray*}
&&\left\vert v\left( x,y\right) -u\left( x,y\right) \right\vert   \notag
\label{eq46} \\
&=&\left\vert 
\begin{array}{c}
v\left( x,y\right) -\dfrac{\left( \psi \left( y\right) -\psi \left( 0\right)\right) ^{\gamma -1}}{\Gamma \left( \gamma \right) }v\left( x,0\right) -
\dfrac{\left( \psi \left( x\right) -\psi \left( 0\right) \right) ^{\gamma -1}%
}{\Gamma \left( \gamma \right) }v\left( 0,y\right)  \\ +v\left( 0,0\right) -I_{\theta }^{\alpha ;\psi }f\left( x,y,u\left(x,y\right) ,u_{1}\left( x,y\right) ,
u_{2}\left( x,y\right) \right) 
\end{array}
\right\vert   \notag \\
&\leq &\left\vert 
\begin{array}{c}
v\left( x,y\right) -\dfrac{\left( \psi \left( y\right) -\psi \left( 0\right)\right) ^{\gamma -1}}{\Gamma \left( \gamma \right) }v\left( x,0\right) -
\dfrac{\left( \psi \left( x\right) -\psi \left( 0\right) \right) ^{\gamma -1}%
}{\Gamma \left( \gamma \right) }v\left( 0,y\right)  \\+v\left( 0,0\right) -I_{\theta }^{\alpha ;\psi }f\left( x,y,v\left(x,y\right) ,v_{1}\left( x,y\right) ,
v_{2}\left( x,y\right) \right) 
\end{array}
\right\vert   \notag \\
&&+I_{\theta }^{\alpha ;\psi }\left\vert f\left( x,y,v\left( x,y\right),v_{1}\left( x,y\right) ,v_{2}\left( x,y\right) \right) -f\left( x,y,u\left(x,y\right) ,
u_{1}\left( x,y\right) ,u_{2}\left( x,y\right) \right)
\right\vert   \notag \\
&\leq &\varepsilon \dfrac{\left( \psi \left( x\right) -\psi \left( 0\right)\right) ^{\alpha _{1}}\left( \psi \left( y\right) -\psi \left( 0\right)
\right) ^{\alpha _{2}}}{\Gamma \left( \alpha _{1}+1\right) \Gamma \left(
\alpha _{2}+1\right) }+L_{f}I_{\theta }^{\alpha ;\psi }\left\{ \underset{i\in \left\{ 1,2,3\right\} }{\max }\left\vert v_{i}\left( x,y\right)-u_{i}
\left( x,y\right) \right\vert \right\}   \notag \\
&\leq &\varepsilon \frac{\left( \psi \left( a\right) -\psi \left( 0\right)\right) ^{\alpha _{1}}\left( \psi \left( b\right) -\psi \left( 0\right)
\right) ^{\alpha _{2}}}{\Gamma \left( \alpha _{1}+1\right) \Gamma \left(
\alpha _{2}+1\right) }\times   \notag \\&&\mathbb{E}_{\alpha }\left[ L_{f}\Gamma \left( \alpha _{1}\right) \Gamma
\left( \alpha _{2}\right) \left( \psi \left( a\right) -\psi \left( 0\right)\right) ^{\alpha _{1}}\left( \psi \left( b\right) -\psi \left( 0\right)
\right) ^{\alpha _{2}}\right]   \notag \\&=&\varepsilon C_{f}^{1}
\end{eqnarray*}
where
\begin{equation*}
C_{f}^{1}:=\frac{\left( \psi \left( a\right) -\psi \left( 0\right) \right) ^{\alpha _{1}}\left( \psi \left( b\right) -\psi \left( 0\right) 
\right) ^{\alpha _{2}}}{\Gamma \left( \alpha _{1}+1\right) \Gamma \left( \alpha
_{2}+1\right) }\mathbb{E}_{\alpha }\left[ L_{f}\Gamma \left( \alpha _{1}\right) \Gamma \left( \alpha _{2}\right) \left( \psi \left( a\right) -\psi 
\left( 0\right) \right) ^{\alpha _{1}}\left( \psi \left( b\right) -\psi \left(
0\right) \right) ^{\alpha _{2}}\right],
\end{equation*}
with ${\mathbb{E}}_{\alpha}(\cdot)$ is the one-parameter Mittag-Leffler function.

Similarly, we get,
\begin{eqnarray*}
&&\left\vert v_{1}\left( x,y\right) -u_{1}\left( x,y\right) \right\vert  \notag  \\
&\leq &\left\vert v_{1}\left( x,y\right) -\frac{\left( \psi \left( y\right)-\psi \left( 0\right) \right) ^{\gamma -1}}{\Gamma \left( \gamma \right) }v_{1}
\left( x,0\right) -I_{0+,y}^{\alpha _{2};\psi }f\left( x,y,v\left(
x,y\right) ,v_{1}\left( x,y\right) ,v_{2}\left( x,y\right) \right)\right\vert   \notag \\
&&+I_{0+,y}^{\alpha _{2};\psi }\left\vert f\left( x,y,v\left( x,y\right),v_{1}\left( x,y\right) ,v_{2}\left( x,y\right) \right) -f\left( x,y,u\left(x,y\right) ,
u_{1}\left( x,y\right) ,u_{2}\left( x,y\right) \right)
\right\vert   \notag \\
&\leq &\varepsilon \frac{\left( \psi \left( y\right) -\psi \left( 0\right)\right) ^{\alpha _{2}}}{\Gamma \left( \alpha _{2}+1\right) }+L_{f}I_{0+,y}^{\alpha _{2};
\psi }\left\{ \underset{i\in \left\{1,2,3\right\} }{\max }\left\vert v_{i}\left( x,y\right) -u_{i}\left(x,y\right) \right\vert \right\}   \notag \\
&\leq &\varepsilon \frac{\left( \psi \left( b\right) -\psi \left( 0\right)\right) ^{\alpha _{2}}}{\Gamma \left( \alpha _{2}+1\right) }\mathbb{E}_{\alpha }
\left[ L_{f}\Gamma \left( \alpha _{2}\right) \left( \psi \left(
b\right) -\psi \left( 0\right) \right) ^{\alpha _{2}}\right]   \notag \\&=&\varepsilon C_{f}^{2}
\end{eqnarray*}
where
\begin{equation*}
C_{f}^{2}:=\frac{\left( \psi \left( b\right) -\psi \left( 0\right) \right) ^{\alpha _{2}}}{\Gamma \left( \alpha _{2}+1\right) }\mathbb{E}_{\alpha }\left[ L_{f}
\Gamma \left( \alpha _{2}\right) \left( \psi \left( b\right) -\psi \left( 0\right) \right) ^{\alpha _{2}}\right]
\end{equation*}
and
\begin{eqnarray*}
&&\left\vert v_{2}\left( x,y\right) -u_{2}\left( x,y\right) \right\vert  \\
&\leq &\left\vert v_{2}\left( x,y\right) -\frac{\left( \psi \left( x\right)-\psi \left( 0\right) \right) ^{\gamma -1}}{\Gamma \left( \gamma \right) }v_{2}\left( 0,y
\right) -I_{0+,x}^{\alpha _{1};\psi }f\left( x,y,v\left(
x,y\right) ,v_{1}\left( x,y\right) ,v_{2}\left( x,y\right) \right)\right\vert  \\
&&+I_{0+,x}^{\alpha _{1};\psi }\left\vert f\left( x,y,v\left( x,y\right),v_{1}\left( x,y\right) ,v_{2}\left( x,y\right) \right) -f\left( x,y,u\left(x,y\right) ,
u_{1}\left( x,y\right) ,u_{2}\left( x,y\right) \right)\right\vert  \\
&\leq &\varepsilon \frac{\left( \psi \left( x\right) -\psi \left( 0\right)\right) ^{\alpha 1}}{\Gamma \left( \alpha _{1}+1\right) }+L_{f}I_{0+,x}^{\alpha _{1};\psi }
\left\{ \underset{i\in \left\{ 1,2,3\right\} }{\max }\left\vert v_{i}\left( x,y\right) -u_{i}\left( x,y\right) \right\vert\right\}  \\
&\leq &\varepsilon \frac{\left( \psi \left( b\right) -\psi \left( 0\right)\right) ^{\alpha _{1}}}{\Gamma \left( \alpha _{1}+1\right) }\mathbb{E}_{\alpha }\left[ L_{f}
\Gamma \left( \alpha _{1}\right) \left( \psi \left(
a\right) -\psi \left( 0\right) \right) ^{\alpha _{1}}\right]  \\&=&\varepsilon C_{f}^{3}
\end{eqnarray*}
where
\begin{equation*}
C_{f}^{3}:=\frac{\left( \psi \left( a\right) -\psi \left( 0\right) \right) ^{\alpha _{1}}}{\Gamma \left( \alpha _{1}+1\right) }\mathbb{E}_{\alpha }\left[ L_{f}\Gamma 
\left( \alpha _{1}\right) \left( \psi \left( a\right) -\psi \left( 0\right) \right) ^{\alpha _{1}}\right].
\end{equation*}

\end{proof}

\begin{remark}
\mbox{ }
\begin{enumerate}
\item In general, if $a=\infty $ or $b=\infty$, then the solution of {\rm{Eq.(\ref{eq3})}} is not Ulam-Hyers stable;

\item Taking the limit $\beta \rightarrow 0$ on both sides of {\rm {Eq.(\ref{eq3})}}, we have a hyperbolic fractional partial differential equation in the $\psi$-Riemann-Liouville sense.  Consequently, {\rm Theorem \ref{ver1}} is valid for the $\psi$-Riemann-Liouville fractional derivative.

\item Taking the limit $\beta \rightarrow 1$ on both sides of {\rm {Eq.(\ref{eq3})}}, we have a hyperbolic fractional partial differential equation in the $\psi$-Caputo sense. Consequently, {\rm Theorem \ref{ver1}} is valid for the  $\psi$-Caputo fractional derivative;

\item Note that if we take the limits $\beta \rightarrow 1$ or $\beta\rightarrow 0$, we find in both cases a hyperbolic fractional partial differential equation in the sense of the respective fractional derivative. We also have freedom to choose a function $\psi\left( \cdot \right)$. Then, from the choice of function $\psi\left( \cdot \right)$ together with one of the limits, we can obtain a class of hyperbolic fractional partial differential equations for which {\rm Theorem \ref{ver1}} holds;

\item Taking the limit $\alpha =\left( \alpha _{1},\alpha _{2}\right)\rightarrow 1$ and choosing $\psi \left( t\right) =t$ in {\rm {Eq.(\ref{eq3})}}, we obtain a hyperbolic fractional partial differential equation; consequently, the above theorem is valid, with $C_{f}^{1}=ba\exp \left(L_{f}ba\right) $, $C_{f}^{2}=b\exp \left( L_{f}b\right) $ and	$C_{f}^{3}=a\exp \left( L_{f}a\right)$.

\end{enumerate}
\end{remark}

\section{Generalized Ulam-Hyers-Rassias stability}

In this section, we prove the generalized Ulam-Hyers-Rassias stability of the hyperbolic fractional partial differential equation \rm {Eq.(\ref{eq3})}. We consider \rm {Eq.(\ref{eq3})} and inequality \rm {Eq.(\ref{eq5})} in the case $a=\infty $ and $b=\infty$.

\begin{theorem}\label{ver2} We assume that: 
\begin{enumerate}

\item $f \in C\left( \left[ 0,\infty \right) \times \left[ 0,\infty \right) \times 	\mathbb{B}^{3}, \mathbb{B}\right)$;

\item  There exists $L_{f}\in C^{1}\left( \left[ 0,\infty \right) \times \left[0,\infty \right),\mathbb{R}_{+}\right) $ such that
\begin{equation*}
\left\vert f\left( x,y,z_{1},z_{2},z_{3}\right) -f\left( x,y,t_{1},t_{2},t_{3}\right) \right\vert \leq L_{f}\left( x,y\right) \underset{i\in \left\{ 1,2,3\right\} }{\max }\left\vert z_{i}-t_{i}\right\vert
\end{equation*}
for all $x,y\in \left[ 0,\infty \right)$.

\item There exist $\lambda _{\varphi }^{1},\lambda _{\varphi }^{2},\lambda _{\varphi }^{3}>0$ such that
\begin{equation*}
\left\{ 
\begin{array}{ccc}
I_{\theta }^{\alpha ;\psi }\varphi \left( x,y\right) \leq \lambda _{\varphi }^{1}\varphi \left( x,y\right) \text{, }\forall x,y\in \left[ 0,\infty \right)  \\ 
I_{0+,y}^{\alpha _{2};\psi }\varphi \left( x,y\right) \leq \lambda _{\varphi }^{2}\varphi \left( x,y\right) ,\text{ }\forall x,y\in \left[ 0,\infty \right)\\
I_{0+,x}^{\alpha _{1};\psi }\varphi \left( x,y\right) \leq \lambda _{\varphi }^{3}\varphi \left( x,y\right) \text{, }\forall x,y\in \left[ 0,\infty \right)
.\end{array}
\right.
\end{equation*}

\item $\varphi :\mathbb{R}_{+}\times \mathbb{R}_{+}\rightarrow \mathbb{R} _{+}$ is increasing.

\end{enumerate}

Then the solution of {\rm {Eq.(\ref{eq3})}} $\left( a=\infty \text{ and }b=\infty \right) $ is generalized Ulam-Hyers-Rassias stable.
\end{theorem}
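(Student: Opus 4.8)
The plan is to reproduce, in the generalized Rassias setting, the three-ingredient scheme that powered the proof of Theorem \ref{ver1}: combine the integral inequalities furnished by Theorem \ref{te3}, the pointwise Lipschitz bound of hypothesis (2), and the Gronwall Lemma \ref{l1}. The essential new feature, needed precisely because $a=\infty$ and $b=\infty$, is hypothesis (3), which replaces the domain-dependent estimates $(\psi(a)-\psi(0))^{\alpha_1}$, $(\psi(b)-\psi(0))^{\alpha_2}$ that were finite in Theorem \ref{ver1} by the self-similar bounds $I^{\,\cdot\,;\psi}\varphi\le\lambda_\varphi^k\varphi$.

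First I would fix a solution $v$ of the inequality Eq.(\ref{eq5}) and let $u$ be the unique solution of Eq.(\ref{eq3}) determined by the initial data read off from $v$ (exactly as in Eq.(\ref{eq45})). By Theorem \ref{te3}, the triple $(v,v_1,v_2)$ satisfies the three integral inequalities whose right-hand sides are $I_\theta^{\alpha;\psi}\varphi$, $I_{0+,y}^{\alpha_2;\psi}\varphi$ and $I_{0+,x}^{\alpha_1;\psi}\varphi$. Then, for each of the differences $|v-u|$, $|v_1-u_1|$, $|v_2-u_2|$, I would add and subtract the corresponding integral of $f$ evaluated at $(u,u_1,u_2)$, apply the triangle inequality to split off the Theorem \ref{te3} term, and bound the remaining integral of the difference of the two $f$'s using the Lipschitz hypothesis (2). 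For the first component this yields
\[
|v-u| \le I_\theta^{\alpha;\psi}\varphi + I_\theta^{\alpha;\psi}\Big(L_f\,\max_{i}|v_i-u_i|\Big),
\]
and the analogous one-variable versions for $|v_1-u_1|$ and $|v_2-u_2|$.

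The decisive step is hypothesis (3): it lets me replace each $I^{\,\cdot\,;\psi}\varphi$ on the right by $\lambda_\varphi^k\varphi$, so that the inhomogeneous term is controlled by $\varphi$ itself rather than by a quantity that would blow up over the unbounded domain. After this substitution I would invoke Lemma \ref{l1}, whose hypotheses are met because $\psi$ is increasing with $\psi'$ nondecreasing and, by hypothesis (4), $\varphi$ is increasing; the Gronwall bound converts the integral self-reference into a Mittag-Leffler factor and produces estimates of the shape $|v-u|\le C_{f,\varphi}^1\varphi$, and likewise $|v_i-u_i|\le C_{f,\varphi}^{\,i+1}\varphi$, with the constants $C_{f,\varphi}^k$ assembled from $\lambda_\varphi^k$, $L_f$ and $\mathbb{E}_\alpha(\cdot)$. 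These are exactly the three bounds defining generalized Ulam-Hyers-Rassias stability.

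The main obstacle I anticipate is the coupling through $\max_i|v_i-u_i|$: the three inequalities are not independent, so the scalar Gronwall lemma does not apply directly to a single component. I would resolve this by introducing the majorant $w:=\max_i|v_i-u_i|$, deriving a single closed scalar inequality for $w$ of the Gronwall type (using the monotonicity supplied by hypotheses (2) and (4) to keep the kernel and the factor $h$ nonnegative and nondecreasing as Lemma \ref{l1} requires), and only afterwards feeding the resulting control on $w$ back into each individual component. Checking that Lemma \ref{l1} genuinely applies to this coupled majorant — rather than tacitly treating the components as decoupled, as the streamlined write-up of Theorem \ref{ver1} does — is the technical heart of the argument.
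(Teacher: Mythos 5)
Your proposal is correct and follows essentially the same route as the paper: fix $v$ solving Eq.(\ref{eq5}), let $u$ solve the Darboux problem with data read off from $v$, split each difference via Theorem \ref{te3} plus the Lipschitz hypothesis, replace each $I^{\,\cdot\,;\psi}\varphi$ by $\lambda_{\varphi}^{k}\varphi$ using hypothesis (3), and close with the Gronwall Lemma \ref{l1} to produce the Mittag-Leffler constants $C_{f,\varphi}^{k}$. Your explicit handling of the coupling through $\max_{i}\left\vert v_{i}-u_{i}\right\vert$ by means of the majorant $w$ is a refinement of a step the paper silently elides (it applies Lemma \ref{l1} componentwise), not a genuinely different argument.
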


\begin{proof} Let $v$ be a solution to the inequality {\rm {Eq.(\ref{eq5})}}. Denote by $u$ the unique solution of the fractional Darboux problem: 
\begin{equation}\label{eq56}
\left\{ 
\begin{array}{ccl}
\dfrac{\partial _{\beta ;\psi }^{2\alpha }u}{\partial _{\beta ;\psi }x^{\alpha }\partial _{\beta ;\psi }y^{\alpha }}\left( x,y\right)  &=&
f\left( x,y,u\left( x,y\right) ,u_{1}\left( x,y\right) ,u_{2}\left(
x,y\right) \right) ,\text{ }\forall x,y\in \left[ 0,\infty \right) \\
I_{\theta }^{1-\gamma ;\psi }u\left( x,0\right)  &=&v\left( x,0\right), \text{ }\forall x,y\in \left[ 0,\infty \right) \\ 
I_{\theta }^{1-\gamma ;\psi }u\left( 0,y\right)  &=&v\left( 0,y\right), \text{ }\forall x,y\in \left[ 0,\infty \right) 
.\end{array}
\right.
\end{equation}

If $u$ is a solution for the fractional problem {\rm Eq.(\ref{eq56})}, then $\left( u,u_{1},u_{2}\right) $ is a solution of the following problem: 
\begin{eqnarray}
u\left( x,y\right)  &=&\frac{\left( \psi \left( y\right) -\psi \left(0\right) \right) ^{\gamma -1}}{\Gamma \left( \gamma \right) }v\left(x,0\right) + \frac{\left( \psi \left( x\right) -\psi \left( 0\right) \right)^{\gamma -1}}{\Gamma \left( \gamma \right) }v\left( 0,y\right) -v\left(0,0\right) \notag  \label{eq57} \\
&&+I_{\theta }^{\alpha ;\psi }f\left( x,y,u\left( x,y\right) ,u_{1}\left(x,y\right) ,u_{2}\left( x,y\right) \right) 
\end{eqnarray}
\begin{equation}\label{eq58}
u_{1}\left( x,y\right) =\frac{\left( \psi \left( y\right) -\psi \left( 0\right) \right) ^{\gamma -1}}{\Gamma \left( \gamma \right) }v_{1}\left( x,0\right) + I_{0+,y}^{\alpha _{2};\psi }f\left( x,y,u\left( x,y\right)
,u_{1}\left( x,y\right) ,u_{2}\left( x,y\right) \right) 
\end{equation}
\begin{equation}\label{eq59}
u_{2}\left( x,y\right) =\frac{\left( \psi \left( x\right) -\psi \left( 0\right) \right) ^{\gamma -1}}{\Gamma \left( \gamma \right) }v_{2}\left( 0,y\right) + I_{0+,x}^{\alpha _{1};\psi }f\left( x,y,u\left( x,y\right)
,u_{1}\left( x,y\right) ,u_{2}\left( x,y\right) \right) 
\end{equation}

From Theorem \ref{te3} and hypothesis (iii), it follows that 
\begin{eqnarray}\label{eq60}
&&\left\vert 
\begin{array}{c}
v\left( x,y\right) -\dfrac{\left( \psi \left( y\right) -\psi \left( 0\right) \right) ^{\gamma -1}}{\Gamma \left( \gamma \right) }v\left( x,0\right) -\dfrac{\left( \psi \left( x\right) -\psi \left( 0\right) \right) ^{\gamma -1} }{\Gamma \left( \gamma \right) }v\left( 0,y\right) \\ 
+v\left( 0,0\right) -I_{\theta }^{\alpha ;\psi }f\left( x,y,v\left( x,y\right) ,v_{1}\left( x,y\right) ,v_{2}\left( x,y\right) \right)
\end{array}
\right\vert  \notag \\
&\leq &I_{\theta }^{\alpha ;\psi }\varphi \left( x,y\right) \leq \lambda _{\varphi }^{1}\varphi \left( x,y\right) ,\text{ } x,y\in \left[ 0,\infty \right) 
\end{eqnarray}%
\begin{eqnarray}\label{eq61}
&&\left\vert v_{1}\left( x,y\right) -\frac{\left( \psi \left( y\right) -\psi \left( 0\right) \right) ^{\gamma -1}}{\Gamma \left( \gamma \right) }v_{1}
\left( x,0\right) -I_{0+,y}^{\alpha _{2};\psi }f\left( x,y,v\left(
x,y\right) ,v_{1}\left( x,y\right) ,v_{2}\left( x,y\right) \right) \right\vert  \notag \\
&\leq &I_{0+,y}^{\alpha _{2};\psi }\varphi \left( x,y\right) \leq \lambda _{\varphi }^{2}\varphi \left( x,y\right) ,\text{ } x,y\in \left[ 0,\infty \right) 
\end{eqnarray}%
\begin{eqnarray}\label{eq62}
&&\left\vert v_{2}\left( x,y\right) -\frac{\left( \psi \left( x\right) -\psi \left( 0\right) \right) ^{\gamma -1}}{\Gamma \left( \gamma \right) } v_{2}\left( 0,y\right) -
I_{0+,x}^{\alpha _{1};\psi }f\left( x,y,v\left(
x,y\right) ,v_{1}\left( x,y\right) ,v_{2}\left( x,y\right) \right) \right\vert  \notag \\
&\leq &I_{0+,x}^{\alpha _{1};\psi }\varphi \left( x,y\right) \leq \lambda _{\varphi }^{3}\varphi \left( x,y\right) ,\text{ } x,y\in \left[ 0,\infty \right).
\end{eqnarray}

Using \rm {Eq.(\ref{eq57})}, \rm {Eq.(\ref{eq58})}, \rm {Eq.(\ref{eq59})}, \rm {Eq.(\ref{eq60})}, \rm {Eq.(\ref{eq61})} and \rm {Eq.(\ref{eq62})}, we get 
\begin{eqnarray}
&&\left\vert v\left( x,y\right) -u\left( x,y\right) \right\vert 
\label{eq63} \\
&\leq &\left\vert 
\begin{array}{c}
v\left( x,y\right) -\dfrac{\left( \psi \left( y\right) -\psi \left( 0\right)\right) ^{\gamma -1}}{\Gamma \left( \gamma \right) }v\left( x,0\right) -\dfrac{\left( \psi 
\left( x\right) -\psi \left( 0\right) \right) ^{\gamma -1}}{\Gamma \left( \gamma \right) }v\left( 0,y\right)\notag  \\ \notag+v\left( 0,0\right) -
I_{\theta }^{\alpha ;\psi }f\left( x,y,v\left(x,y\right) ,v_{1}\left( x,y\right) ,v_{2}\left( x,y\right) \right) 
\end{array}
\right\vert   \notag \\
&&+I_{\theta }^{\alpha ;\psi }\left\vert f\left( x,y,v\left( x,y\right),v_{1}\left( x,y\right) ,v_{2}\left( x,y\right) \right) -f\left( x,y,u\left(x,y\right) ,
u_{1}\left( x,y\right) ,u_{2}\left( x,y\right) \right)\right\vert   \notag \\
&\leq &\lambda _{\varphi }^{1}\varphi \left( x,y\right) +I_{\theta }^{\alpha;\psi }\left\{ L_{f}\left( x,y\right) \underset{i\in \left\{ 1,2,3\right\} }{
\max }\left\vert v_{i}\left( x,y\right) -u_{i}\left( x,y\right) \right\vert
\right\} .
\end{eqnarray}

From Lemma \ref{l1}, it follows that
\begin{eqnarray*}
&&\left\vert v\left( x,y\right) -u\left( x,y\right) \right\vert   \notag
\label{eq64} \\
&\leq &\lambda _{\varphi }^{1}\varphi \left( x,y\right) \mathbb{E}_{\alpha } \left[ L_{f}\left( x,y\right) \Gamma \left( \alpha _{1}\right) \Gamma \left( \alpha _{2}\right) \left( \psi \left( b\right) -\psi \left( t\right) \right)^{\alpha _{2}}\left( \psi \left( a\right) -\psi \left( s\right) \right) ^{\alpha _{1}}\right]   \notag \\
&\leq &\lambda _{\varphi }^{1}\varphi \left( x,y\right) \mathbb{E}_{\alpha } \left[ L_{f}\left( x,y\right) \Gamma \left( \alpha _{1}\right) \Gamma \left( \alpha _{2}\right) \left( \psi \left( \infty \right) -\psi \left( 0\right)
\right) ^{\alpha _{2}}\left( \psi \left( \infty \right) -\psi \left( 0\right) \right) ^{\alpha _{1}}\right]   \notag \\ 
&=&C_{f,\varphi }^{1}\varphi \left( x,y\right) 
\end{eqnarray*}
where
\begin{equation*}
C_{f,\varphi }^{1}:=\lambda _{\varphi }^{1}\mathbb{E}_{\alpha }\left[ L_{f}\left( x,y\right) \Gamma \left( \alpha _{1}\right) \Gamma \left( \alpha _{2}\right) \left( \psi \left( \infty \right) -\psi \left( 0\right) \right) ^{\alpha _{2}}\left( \psi \left( \infty \right) -\psi \left( 0\right) \right) ^{\alpha _{1}}\right] 
\end{equation*}
$ x,y\in \left[ 0,\infty \right)$, $\dfrac{1}{2}<\alpha \leq 1$ and $ \psi \left( \infty \right) <\infty$.

Similarly, we have 
\begin{eqnarray*}
\left\vert v_{1}\left( x,y\right) -u_{1}\left( x,y\right) \right\vert  &\leq &\lambda _{\varphi }^{2}\varphi \left( x,y\right) \mathbb{E}_{\alpha }\left[ L_{f} \left( x,y\right) \Gamma \left( \alpha _{1}\right) \left( \psi \left( \infty \right) -\psi \left( 0\right) \right) ^{\alpha _{1}}\right]   \notag \\
&=&C_{f,\varphi }^{2}\varphi \left( x,y\right) 
\end{eqnarray*}
where
\begin{equation*}
C_{f,\varphi }^{2}:=\lambda _{\varphi }^{2}\mathbb{E}_{\alpha }\left[ L_{f}\left( x,y\right) \Gamma \left( \alpha _{1}\right) \left( \psi \left( \infty \right) -\psi \left( 0\right) \right) ^{\alpha _{1}}\right] 
\end{equation*}
$ x,y\in \left[ 0,\infty \right)$, $\dfrac{1}{2}<\alpha \leq 1$ and $\psi \left( \infty \right) <\infty $.

Also, 
\begin{eqnarray*}
\left\vert v_{2}\left( x,y\right) -u_{2}\left( x,y\right) \right\vert  &\leq &\lambda _{\varphi }^{3}\varphi \left( x,y\right) \mathbb{E}_{\alpha } \left[ L_{f}\left( x,y\right) \Gamma \left( \alpha _{2}\right) \left( \psi \left( \infty \right) -\psi \left( 0\right) \right) ^{\alpha _{2}}\right]   \notag \\
&=&C_{f,\varphi }^{3}\varphi \left( x,y\right) 
\end{eqnarray*}
where
\begin{equation*}
C_{f,\varphi }^{3}:=\lambda _{\varphi }^{3}\mathbb{E}_{\alpha }\left[ L_{f}\left( x,y\right) \Gamma \left( \alpha _{2}\right) \left( \psi \left( \infty \right) -\psi \left( 0\right) \right) ^{\alpha _{2}}\right] 
\end{equation*}
$ x,y\in \left[ 0,\infty \right)$, $\dfrac{1}{2}<\alpha \leq 1$ and $\psi \left( \infty \right) <\infty $.

Therefore, the solution of Eq.(\ref{eq3}) is generalized Ulam-Hyers-Rassias stable.

\end{proof}


\section{Concluding remarks}

By means of the $\psi$-Riemann-Liouville fractional partial integral and the $\psi$-Hilfer fractional partial derivative of $N$ variables, we investigated a fractional partial differential equation of hyperbolic
type and showed that its solutions admits Ulam-Hyers and Ulam-Hyers-Rassias stabilities in a Banach space $\mathbb{B}$. Besides, we have also made some remarks which we consider extremely important, in particular the ones concerning the wide class of fractional partial integrals and derivatives obtained from these new
extensions, as in Remarks 1 and 2. 

It is important to note that the extension of the $\psi$-Hilfer fractional derivative in one variable to the case of $N$ variables shows that the use of fractional calculus can be naturally extended in the case of models involving partial differential equations, in the study of the existence and uniqueness of solutions of such PDEs and in the analytical continuation of solutions of partial differential equations.  In a forthcoming work, we study the Volterra integral equations and various types of stability of partial differential equations and other properties for $\psi$-Hilfer fractional partial derivative.

\section*{Acknowledgment}
The authors are grateful to Dr. J. Em\'{\i}lio Maiorino for several useful discussions.

\bibliography{ref}
\bibliographystyle{plain}

\end{document}